\DeclareMathOperator{\Irr}{Irr}
\DeclareMathOperator{\GL}{GL}
\DeclareMathOperator{\Sp}{Sp}
\DeclareMathOperator{\Char}{\mathsf{Char}}
\DeclareMathOperator{\geom}{{geom}}
\DeclareMathOperator{\lcm}{lcm}
\DeclareMathOperator{\Wild}{\mathsf{Wild}}
\DeclareMathOperator{\Tame}{\mathsf{Tame}}
\begin{document}
\setlength{\parindent}{15pt}

\newtheorem*{qst}{Question}
\newtheorem{thm}{Theorem}[section]
\newcommand{\thmautorefname}{Theorem}

\numberwithin{equation}{thm}

\newtheorem*{thm*}{Theorem}

\newaliascnt{prop}{thm}
\newtheorem{prop}[prop]{Proposition}
\aliascntresetthe{prop}
\newcommand{\propautorefname}{Proposition}

\newaliascnt{lem}{thm}
\newtheorem{lem}[lem]{Lemma}
\aliascntresetthe{lem}
\newcommand{\lemautorefname}{Lemma}

\newaliascnt{crl}{thm}
\newtheorem{crl}[crl]{Corollary}
\aliascntresetthe{crl}
\newcommand{\crlautorefname}{Corollary}
\theoremstyle{definition}
\newtheorem*{dfn}{Definition}
\newaliascnt{rmk}{thm}
\newtheorem{rmk}[rmk]{Remark}
\aliascntresetthe{rmk}
\newcommand{\rmkautorefname}{Remark}

\title{Hypergeometric sheaves and extraspecial groups in even characteristic}
\author{Lee Tae Young\\\small Department of Mathematics, Rutgers University, Piscataway, NJ 08854, USA\\ \small tae.young@binghamton.edu}

\maketitle

\begin{abstract}
We determine precisely which irreducible hypergeometric sheaves have an extraspecial normalizer in characteristic $2$ as their geometric monodromy groups. This resolves the last open case of the determination of local monodromy at $0$ of irreducible hypergeometric sheaves with finite geometric monodromy group.
\end{abstract}

\textbf{2010 Mathematics Subject Classification:} 20C15, 11T23, 20D15

\textbf{Keywords:} Local systems, Monodromy groups, Extraspecial groups, Exponential Sums

\tableofcontents

\section{Introduction}
Let $p$ be a prime. The finite quotient groups of the \'etale fundamental group $\pi_1^{et}(\mathbb{G}_m/\overline{\mathbb{F}_p})$ are the finite groups generated by its Sylow $p$-subgroups together with at most one other element; this was conjectured by Abhyankar\cite{Ab57} and proved by Harbater\cite{Har}. In their recent series of papers, Katz, Rojas-Le\'on and Tiep realized many pairs of such finite group $G$ and its faithful complex representation $V$ as the geometric monodromy groups of explicitly written $\overline{\mathbb{Q}_\ell}$-local systems, usually hypergeometric sheaves, on $\mathbb{G}_m/\overline{\mathbb{F}_p}$.

In \cite{KT2, KT4, KRLT4}, they determined which pairs $(G,V)$ can be realized using irreducible hypergeometric sheaves. It would be desirable to achieve a complete classification of irreducible hypergeometric sheaves in terms of their monodromy groups. As hypergeometric sheaves are completely determined by its local monodromies, one can first ask which local monodromy at $0$ can be realized. Specifically, we want to see which triples $(G,V,g)$, where $g\in G$ is an element with certain properties which must be satisfied by the local monodromy at $0$ of a hypergeometric sheaf, can be realized by irreducible hypergeometric sheaves. This was also completed in \cite{KT2,KT4,KRLT4} for all but one type of groups: the so-called extraspecial normalizers in characteristic $2$. 

The goal of this paper is to completely answer this question, not only about the local monodromy at $0$ but also at $\infty$, and write down a complete classification of irreducible hypergeometric sheaves whose geometric monodromy group is a finite extraspecial normalizer in characteristic $2$. In \autoref{m2sp}, we classify the elements of extraspecial normalizers in characteristic $2$ which has odd order and has no eigenvalues of multiplicity exceeding $2$ on an $2^n$-dimensional representation extending the unique irreducible representation of the normal extraspecial $2$-subgroup of order $2^n$ of that dimension. This is one of several properties that must be satisfied by the local monodromies at $0$ and $\infty$ of irreducible hypergeometric sheaves. After taking some properties of hypergeometric sheaves into account, in \autoref{g_infty} we get a more restrictive list of possible local monodromies. We then study the hypergeometric sheaves defined by each combination of the local monodromies at $0$ and $\infty$, and determine precisely when it has finite geometric monodromy group. This is our main result \autoref{mainthm}, which says that these local systems are precisely those discovered and studied earlier by Pink, Sawin, Katz and Tiep in \cite{KT2,KT4}, where the isomorphism type of their geometric monodromy groups were determined. This completes the classification of the local systems in question.

\section{Preliminaries}
In this section, we recall some definitions and basic properties of the objects we will study. For more definitions and properties, we refer to \cite{K-ESDE} and \cite{KT2}.

For $x\in \{0,\infty\}$, we fix a choice of an inertia group $I(x)<\pi_1^{et}(\mathbb{G}_m/\overline{\mathbb{F}_p})$, and we denote by $P(x)$ the wild inertial group in $I(x)$. We also fix an element $\gamma_x$ of order prime to $p$ in $I(x)$ whose image in $I(x)/P(x)$ is a topological generator of this pro-cyclic group.

For a nontrivial additive character $\psi$ of $\mathbb{F}_p$ and multiplicative characters $\chi_1,\dots,\chi_D$ and $\rho_1,\dots,\rho_M$ of a finite extension of $\mathbb{F}_p$, there exists a hypergeometric sheaf of type $(D,M)$ $$\mathcal{H}=\mathcal{H}yp_\psi(\chi_1,\dots,\chi_D;\rho_1,\dots,\rho_M).$$ It is tame at $0$, and its local monodromy at $\infty$ (that is, the restriction to $I(\infty)$) is given by $\Tame\oplus \Wild$, where $\Tame$ is a $M$-dimensional tame representation and $\Wild$ is a $(D-M)$-dimensional irreducible representation of Swan conductor $1$. The \emph{geometric monodromy group} $G_{\geom}\leq \GL_D(\overline{\mathbb{Q}_\ell})$ of $\mathcal{H}$ is the Zariski closure of the image of $\pi_1^{et}(\mathbb{G}_m/\overline{\mathbb{F}_p})$ on $\mathcal{H}$. We denote by $J, Q\leq G_{\geom}$ and $g_x\in G_{\geom}$ the images of $I(\infty)$, $P(\infty)$ and $\gamma_x$ on $\mathcal{H}$ when the local system is clear from the context. The restriction $\Wild|_Q$ (or equivalently $\Wild|_{P(\infty)}$) decomposes into a direct sum of $W_0$ pairwise nonisomorphic irreducible representations, where $W_0$ is the $p'$-part of $\dim\Wild=D-M$. 

The characters $\chi_i$ are called the \emph{upstairs characters}, and $\rho_j$ are called the \emph{downstairs characters} of $\mathcal{H}$. $\mathcal{H}$ is irreducible if and only if no $\{\chi_1,\dots,\chi_D\}\cap\{\rho_1,\dots,\rho_M\}=\emptyset$. Also, $G_{\geom}$ can be finite only when $\chi_1,\dots,\chi_D$ are pairwise distinct and so are $\rho_1,\dots,\rho_M$. For all hypergeometric sheaves in the rest of this paper, we will always assume that these properties are satisfied. In particular, the element $g_0\in G_{\geom}<\GL_D(\overline{\mathbb{Q}_\ell})$ must be an $\mathsf{ssp}$-element and $g_\infty$ must be an $\mathsf{m2sp}$-element, as defined below. 

\begin{dfn}
An element $g\in \GL_n(\mathbb{C})$ is said to be 
\begin{enumerate}[label=(\roman*)]
\item an \emph{$\mathsf{ssp}$-element} if each eigenvalue of $g$ has multiplicity $1$, and
\item an \emph{$\mathsf{m2sp}$-element} if each eigenvalue of $g$ has multiplicity at most $2$.
\end{enumerate}
\end{dfn}

Katz and Tiep \cite[Theorem 5.2.9]{KT4} \cite{KT5} showed that most irreducible primitive hypergeometric sheaves (as representations of their $G_{\geom}$) satisfy the following condition.
\begin{dfn}
A group $G$ and its faithful complex representation $V$ (or equivalently, a subgroup $G\leq \GL_n(\mathbb{C})$) is said to satisfy \emph{condition $\mathbf{(S)}$} if $V$ is irreducible, primitive, tensor indecomposable, and not tensor induced. If in addition $\mathbf{Z}(G)$ is finite, then we say $(G,V)$ satisfies \emph{condition $\mathbf{(S+)}$}. 
\end{dfn}

By a result of Guralnick and Tiep \cite[Proposition 2.8]{GT2}, such finite groups, in particular the $G_{\geom}$ of $\mathbf{(S+)}$-hypergeometric sheaves, must be either almost quasisimple or an extraspecial normalizer as defined below. 

\begin{dfn}
An \emph{extraspecial normalizer} in characteristic $p$ is a finite group $G< \GL(V)$ which contains a normal subgroup $R=\mathbf{Z}(R)E$, where $V=\mathbb{C}^{p^n}$ for some $1\leq n\in \mathbb{Z}$, $E$ is an extraspecial $p$-group of order $p^n$ which acts irreducibly on $V$, and either $R=E$ or $\mathbf{Z}(R)\cong C_4$, the cyclic group of order $4$.
\end{dfn}

In \cite{KT2, KT4, KRLT4}, Katz, Rojas-Le\'on and Tiep determined whether each triple $(G,V,g)$ of a finite group $G$ which is either almost quasisimple or an extraspecial normalizer, its faithful complex representation $V$, and an element $g\in G$ which acts as an $\mathsf{ssp}$-element on $V$ can be realized as the $G_{\geom}$ and $g_0$ of an irreducible hypergeometric sheaf. The only case which was not completely covered by their results is the case of extraspecial normalizers in characteristic $2$.

\section{$\mathsf{m2sp}$-elements of extraspecial normalizers}
To find the candidates for $g_\infty$, we first find all $\mathsf{m2sp}$-elements of odd order in extraspecial normalizers in characteristic $2$. This is just an extension of \cite[Theorem 8.5]{KT2}, which classifies the $\mathsf{ssp}$-elements.

\begin{thm}\label{m2sp}
Let $V=\mathbb{C}^{2^n}$ and let $G$ be a finite irreducible subgroup of $\GL(V)$ that satisfies $(\mathbf{S}+)$ and is an extraspecial normalizer, so that $G\rhd R=\mathbf{Z}(R)E$ for some normal subgroup $R$ which contains an extraspecial $2$-group $E=2^{1+2n}_{\epsilon}$ that acts irreducibly on $V$, and either $R=E$ or $\mathbf{Z}(R)\cong C_4$. Suppose that an element $g\in G$ of odd order has no eigenvalues with multiplicity larger than $2$ on $V$ and that $2^n\geq 16$. Then there exists integers $a_1,\dots, a_t$ such that $a_1+\cdots+a_t = n$, integers $r_1,\dots, r_t$ such that $0\leq r_i< 2^{a_i}-\epsilon_i$ where $\epsilon_i=-1$ for $2\leq i\leq t$ and $\epsilon_1=(-1)^{t-1}\epsilon$, so that the image of $g$ in $G/\mathbf{Z}(G)R$ lies in a maximal torus $$C_{2^{a_1}-\epsilon_1}\times\cdots\times C_{2^{a_t}-\epsilon_t}<\mathbf{O}_{2a_1}^{\epsilon_1}\times\cdots\times \mathbf{O}_{2a_t}^{\epsilon_t}\leq \mathbf{O}_{2n}^{\epsilon}.$$
Moreover, one of the following holds: 
\begin{enumerate}[label={\upshape{(\alph*)}}]
\item $2^{a_1}+1,\dots,2^{a_t}+1$ are pairwise coprime, and $\gcd(r_i, 2^{a_i}+1)=1$ for all $i$. In this case $\overline{\mathsf{o}}(g)= \prod_{i=1}^{t}(2^{a_i}+1)$. (This is precisely the situation of \cite[Theorem 8.5(ii)]{KT2}.)
\item $2^{a_1}+1,\dots,2^{a_{t-1}}+1$ are pairwise coprime, $\gcd(r_i, 2^{a_i}+1)=1$ for $1\leq i\leq t-1$, $a_t=1$, and $r_t=0$. In this case $\overline{\mathsf{o}}(g) = \prod_{i=1}^{t-1}(2^{a_i}+1)=(\prod_{i=1}^{t}(2^{a_i}+1))/3$.
\item $2^{a_1}+1,\dots,2^{a_{t-1}}+1$ are pairwise coprime, $\gcd(r_i, 2^{a_i}+1)=1$ for all $i$, $a_t=1$, and $a_i$ is odd for some (unique) $i<t$. In this case, $n$ is even and $\overline{\mathsf{o}}(g) = \prod_{i=1}^{t-1}(2^{a_i}+1)=(\prod_{i=1}^{t}(2^{a_i}+1))/3$. 
\item $2^{a_1}-1, 2^{a_2}+1,\dots, 2^{a_t}+1$ are pairwise coprime, $\gcd(r_i, 2^{a_i}+1)=1$ for $2\leq i\leq t$, and $\gcd(r_1,2^{a_1}-1)=1$.
\end{enumerate}
In cases (a), (b) and (c), the spectrum of $g$ on $V$ is $$\left\lbrace\xi \prod_{i=1}^{t}\zeta_i^{r_i} \mid \zeta_i\in \mu_{2^{a_i}+1}\setminus\{1\}\right\rbrace$$ as a multiset, where $\xi$ is a root of unity of odd order. In case (d), the spectrum of $g$ on $V$ is $$\left\lbrace\xi\prod_{i=1}^{t}\zeta_i^{r_i} \mid \zeta_1\in \mu_{2^{a_1}-1}\sqcup\{1\}, \zeta_i\in \mu_{2^{a_i}+1}\setminus\{1\}\text{ for }i\geq 2\right\rbrace$$ as a multiset, where $\xi$ is again a root of unity of odd order.
\end{thm}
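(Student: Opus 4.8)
The plan is to reduce to the structure of the image $\bar g$ of $g$ in the quotient $\bar G := G/\mathbf{Z}(G)R$, which (by the theory of extraspecial normalizers in characteristic $2$, as used in \cite[Theorem 8.5]{KT2}) embeds into $\mathbf{O}_{2n}^{\epsilon}(2) = \Sp_{2n}(2)$-type structure acting on $E/\mathbf{Z}(E) \cong \mathbb{F}_2^{2n}$ preserving the quadratic form of type $\epsilon$. Since $g$ has odd order, so does $\bar g$, hence $\bar g$ is a semisimple element of $\mathbf{O}_{2n}^{\epsilon}(2)$ and therefore lies in a maximal torus; decomposing the $\mathbb{F}_2[\langle\bar g\rangle]$-module $\mathbb{F}_2^{2n}$ into orthogonally indecomposable pieces gives the decomposition $n = a_1 + \cdots + a_t$ together with the torus factors $C_{2^{a_i}-\epsilon_i}$, where a piece of type $-$ is a nondegenerate $2a_i$-space of minus type on which $\bar g$ acts with order dividing $2^{a_i}+1$ (irreducibly if the order is exactly a primitive prime divisor of that), and a piece of type $+$ is a sum of a totally isotropic $a_i$-space and its dual. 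The sign constraint $\epsilon_1 = (-1)^{t-1}\epsilon$ is just the multiplicativity of the discriminant/Arf invariant: $\prod \epsilon_i = \epsilon$ with $\epsilon_i = -1$ forced for $i \ge 2$ by the normalization that orders the non-split factors first.

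The heart of the argument is the eigenvalue (multiplicity $\le 2$) analysis. Here I would follow the Clifford-theoretic setup of \cite[Section 8]{KT2}: writing $V = \mathbb{C}^{2^n}$ as the unique irreducible $E$-module, an element $g$ normalizing $E$ and having image $\bar g$ in the orthogonal group acts on $V$ projectively as a Weil-type representation, and its eigenvalues on $V$ are, up to a global root of unity $\xi$ of odd order (accounting for the $\mathbf{Z}(G)$ and the choice of lift to $\GL(V)$), given by products over the indecomposable blocks: a minus-block of size $2a_i$ contributes the $2^{a_i}+1 $ values $\zeta_i \in \mu_{2^{a_i}+1}$ but with the trivial value $\zeta_i = 1$ omitted and the rest each appearing once — exactly $2^{a_i}$ eigenvalues — twisted by the exponent $r_i$ recording which generator of the torus factor $\bar g$ induces; a plus-block of size $2a_i$ with $a_i \ge 2$ contributes $\mu_{2^{a_i}-1}\sqcup\{1\}$, i.e. the $2^{a_i}-1$ nontrivial $(2^{a_i}-1)$-th roots each once plus two copies of $1$; and a plus-block with $a_i = 1$ (necessarily $\bar g$ trivial there, $r_i = 0$) contributes two copies of $1$. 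One then forms the multiset product of these block-spectra over all blocks and asks when no value occurs with multiplicity $> 2$. A single minus-block is always fine (all multiplicities $1$); a single plus-block of size $\ge 4$ gives the eigenvalue $1$ multiplicity exactly $2$, fine; so the constraint only bites when $t \ge 2$ and one must control collisions between $\prod \zeta_i^{r_i}$ and $\prod \zeta_i'^{r_i}$. This is a coprimality calculation: if all the $2^{a_i}+1$ (resp.\ $2^{a_1}-1$ with the rest $2^{a_i}+1$) are pairwise coprime and each $r_i$ is a unit modulo the corresponding modulus, then by CRT the product map $\prod \mu_{2^{a_i}\pm1} \to \mu_{\lcm}$ is injective, so multiplicities stay at $1$ (or $2$ where a trivial value is doubled); conversely any shared prime factor, or a non-unit $r_i$, produces a fiber of size $\ge 3$ somewhere. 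The special cases (b) and (c) are exactly where one plus-block of size $a_t = 1$ is allowed to coexist with the minus-blocks: it merely doubles every eigenvalue, which is still $\le 2$ provided the remaining minus-blocks already give multiplicity $1$ — hence the "$a_t = 1$, $r_t = 0$" clause and the divide-by-$3$ in $\overline{\mathsf{o}}(g)$; case (c) additionally permits one odd $a_i$ among the genuine minus-blocks because $3 \mid 2^{a_i}+1$ then, so the would-be coprimality failure with the size-$1$ plus-block is absorbed, at the cost of forcing $n$ even.

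The main obstacle I anticipate is not the CRT bookkeeping but pinning down precisely the block-spectrum dictionary — i.e.\ proving that a plus-type block of size $2a_i$ really contributes the multiset $\mu_{2^{a_i}-1}\sqcup\{1\}$ (with its characteristic doubled trivial eigenvalue) rather than something else, and that a minus-type block contributes $\mu_{2^{a_i}+1}\setminus\{1\}$. This requires carefully tracking the Weil representation of $\Sp$/$\mathbf{O}$ in characteristic $2$ and its restriction to maximal tori, using that the eigenvalue multiset of a semisimple element in the Weil representation is governed by the sizes of its orbits on $E/\mathbf{Z}(E) \setminus \{0\}$ (equivalently, by the Brauer character / Gauss-sum formula for the Weil character). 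Once that dictionary is in hand, the remainder is: (1) enumerate which combinations of block types can occur under the odd-order and $2^n \ge 16$ hypotheses (the size bound rules out the small sporadic exceptions to "$\bar G \hookrightarrow$ orthogonal group" and guarantees the generic Weil-representation behavior), (2) impose multiplicity $\le 2$ and extract the four cases by the coprimality/unit analysis above, and (3) read off $\overline{\mathsf{o}}(g)$ in each case as the order of $\bar g$, which is the $\lcm$ of the block contributions — $\prod(2^{a_i}+1)$ in case (a), that product divided by $3$ in cases (b) and (c) where the size-$1$ plus-block contributes order $1$, and $\lcm(2^{a_1}-1, \prod_{i\ge2}(2^{a_i}+1))$ in case (d).
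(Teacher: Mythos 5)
Your overall architecture matches the paper's: the actual proof also runs through the central-product decomposition $E = E_1 \circ \cdots \circ E_t$, $V = V_1 \otimes \cdots \otimes V_t$, the block-spectrum dictionary ($\mu_{2^{a_i}+1}\setminus\{1\}$ for a minus block, $\mu_{2^{a_i}-1}\sqcup\{1\}$ for the at most one plus block), and then a multiplicity count on the product multiset. The dictionary you flag as the ``main obstacle'' is in fact simply imported from the proof of \cite[Theorem 8.5]{KT2}, which already writes $g = zh$ with $z$ scalar and $h = s_1^{r_1}\cdots s_t^{r_t}$ and records the spectra of the $s_i$; no new Weil-representation analysis is needed at that point.

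The genuine gap is in your collision analysis. The blanket claim that any shared prime factor, or any non-unit $r_i$, produces a fiber of size at least $3$ is false exactly in the borderline situations that generate clauses (b) and (c): a non-unit $r_i$ with $(r_i,a_i)=(0,1)$ gives a fiber of size $2$ (case (b)), and a shared factor $d=\gcd(2^{a_i}+1,2^{a_j}+1)=3$ gives, via the antidiagonal choices $(\zeta,\zeta^{-1}),(\zeta^2,\zeta^{-2})$, only $d-1=2$ coincident eigenvalues. To force one of the two exponents to equal $1$ in case (c), one needs the further construction that when both $2^{a_i}+1$ and $2^{a_j}+1$ exceed $3$ one can choose $(\xi,\eta),(\xi\zeta,\eta\zeta^{-1}),(\xi\zeta^{-1},\eta\zeta)$ to manufacture three equal eigenvalues; this step is the entire content of the $d=3$ dichotomy and is missing from your sketch, so as written you cannot exclude two minus blocks with both $a_i,a_j$ odd and greater than $1$. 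A secondary issue: you describe the $a_t=1$ factor in cases (b) and (c) as a plus-type block, which conflicts with the theorem's normalization $\epsilon_i=-1$ for $i\geq 2$. Re-typing a single $2$-dimensional block flips the type of $E$ itself (only pairs of blocks can be re-typed, since $D_8\circ D_8\cong Q_8\circ Q_8$), and the type of $E$ is used downstream in the paper, so this is not a harmless relabeling even though the contributed multiset $\{1,1\}$ is the same.
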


\begin{proof}
We use the notations in the proof of \cite[Theorem 8.5]{KT2}. By that proof we know that $V=V_1\otimes V_2\otimes \cdots \otimes V_t$ and $E=E_1\circ E_2\circ \cdots \circ E_t$ where $V_i=\mathbb{C}^{2^{a_i}}$ and $E_i\cong 2_{\epsilon_i}^{1+2a_i}$ for $\epsilon_i=\pm$ ($\epsilon_i=\pm 1$ when interpreted as a number). Also, $g=zh$ for some $z\in \mathbf{Z}(\GL(V))$ and $h\in \langle s_1, \dots, s_t\rangle$, where $s_i\in \Sp^{\epsilon_i}(V_i)<\Sp^\epsilon(V)$ is an element of order $2^{a_i}-\epsilon_i$ whose spectrum on $V_i$ is $\mu_{2^{a_i}+1}\setminus \{1\}$ if $\epsilon_i=-$, and $\mu_{2^{a_i}-1}$ (with $1$ having multiplicity $2$) if $\epsilon_i=+$. We may write $h=s_1^{r_1}s_2^{r_2}\cdots s_t^{r_t}$ for $0\leq r_i< 2^{a_i}-\epsilon_i$.

Suppose that $g$ has no eigenvalues of multiplicity larger than $2$ on $V$. Since $G$ acts irreducibly on $V$, $z$ acts as a scalar. Hence $h=gz^{-1}$, whose spectrum is given by the product of the spectra of $s_i^{r_i}$'s as described above, also has no eigenvalue of multiplicity larger than $2$. If $\epsilon_{i_1}=\epsilon_{i_2}=+$ for some $i_1\neq i_2$, then the spectrum of $s_{i_1}s_{i_2}$ on $V_{i_1}\otimes V_{i_2}$ has eigenvalue $1$ with multiplicity $4$, whence the spectrum of $h$ must also have eigenvalue with multiplicity at least $4$. Therefore, $\epsilon_{i}$ can be $+$ for at most one $i$. 

Suppose that $\epsilon_i=-$ for every $i$. Then the spectrum of $h$ on $V$ is $$\left\lbrace\prod_{i=1}^{t}\zeta_i^{r_i} \mid \zeta_i\in \mu_{2^{a_i}+1}\setminus\{1\}\right\rbrace\text{ (as a multiset)}.$$ In order to have no eigenvalue of multiplicity larger than $2$, we must have $\gcd(r_i, 2^{a_i}+1)=1$ for all $i$ with at most one possible exception $i=i_0$ for which $(r_{i_0},a_{i_0})=(0,1)$. Also, if $d:=\gcd(2^{a_{i_1}}+1,2^{a_{i_2}}+1)>1$ for some $i_1\neq  i_2$ and $\gcd(r_{i_1},2^{a_{i_1}}+1) = \gcd(r_{i_2},2^{a_{i_2}}+1)=1$, then we may choose $(\zeta_i,\zeta_j)= (\zeta,\zeta^{-1}), (\zeta^2,\zeta^{-2}),\dots,(\zeta^{-1},\zeta)$ for some primitive $d$th root of unity $\zeta$ to get $d-1$ copies of some eigenvalue. The only possibility is $d=3$. Moreover, if both $2^{a_{i_1}}+1$ and $2^{a_{i_2}}+1$ are larger than $3$, then we can instead choose $(\zeta_{i_1},\zeta_{i_2}) = (\xi,\eta), (\xi\zeta,\eta\zeta^{-1}),(\xi\zeta^{-1}, \eta\zeta)$ for some primitive $2^{a_{i_1}}+1$th and $2^{a_{i_2}}+1$th roots of unity $\xi$ and $\eta$ to get $3$ copies of same eigenvalue. Therefore, either $a_i=1$ or $a_j=1$. Of course, at most one such $i_0$ and at most one such pair $(i_1,i_2)$ can exist, and if they both exist then $i_0\in \{i_1,i_2\}$. After reindexing if necessary, we can assume that $a_1>a_2>\cdots > a_{t-1}\geq a_t$, and we are in one of the situations (a), (b) and (c).

Next, suppose that $\epsilon_1=+$ and $\epsilon_2=\cdots=\epsilon_t=-$. The spectrum of $h$ on $V$ is now $$\left\lbrace\prod_{i=1}^{t}\zeta_i^{r_i} \mid \zeta_1\in \mu_{2^{a_1}-1}\sqcup\{1\}, \zeta_i\in \mu_{2^{a_i}+1}\setminus\{1\}\text{ for }i\geq 2\right\rbrace\text{ (as a multiset)}.$$ Since $1$ has multiplicity $2$ in $\mu_{2^{a_1}-1}\sqcup\{1\}$, the numbers $\prod_{i=2}^t\zeta_i^{r_i}$ should be all distinct, and no two of them differ by a factor of a $(2^{a_1}-1)$th root of unity. Therefore we are in the situation (d). 
\end{proof}

In the above theorem, we had the condition that $2^{a_i}+1$ are pairwise coprime. It might be convenient to note the following easy equivalent condition.
\begin{lem}\label{2powerplus1}
$2^{a}+1$ and $2^{b}+1$ are coprime if and only if $a$ and $b$ have different $2$-parts (the highest power of $2$ dividing it). 
\end{lem}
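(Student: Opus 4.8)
The plan is to prove the two implications separately, in each case working modulo a prime and tracking $2$-adic valuations, so as to avoid invoking the full formula for $\gcd(2^a+1,2^b+1)$.

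\textbf{Different $2$-parts $\Rightarrow$ coprime.} I would argue the contrapositive. Suppose a prime $p$ divides both $2^a+1$ and $2^b+1$; since both numbers are odd, $p$ is odd. Let $e=\mathrm{ord}_p(2)$. From $2^a\equiv-1\pmod p$ we get $e\mid 2a$ but $e\nmid a$. The key observation is that $e\mid 2a$ forces the odd part of $e$ to divide the odd part of $2a$, namely the odd part of $a$; hence the failure $e\nmid a$ can only be caused by the power of $2$, and comparing valuations gives $v_2(e)=v_2(a)+1$. Running the same argument with $b$ in place of $a$ yields $v_2(e)=v_2(b)+1$, so $v_2(a)=v_2(b)$. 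Thus if $a$ and $b$ have different $2$-parts, no prime divides both $2^a+1$ and $2^b+1$.

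\textbf{Same $2$-part $\Rightarrow$ not coprime.} Suppose $v_2(a)=v_2(b)$ and set $d=\gcd(a,b)$. Then $v_2(d)=\min(v_2(a),v_2(b))=v_2(a)$, so both $a/d$ and $b/d$ are odd. Since $X+1$ divides $X^k+1$ whenever $k$ is odd, substituting $X=2^d$ shows that $2^d+1$ divides both $2^a+1$ and $2^b+1$; as $d\geq 1$ we have $2^d+1\geq 3$, so $\gcd(2^a+1,2^b+1)>1$.

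I do not expect a genuine obstacle here. The only point requiring a little care is the valuation computation $v_2(\mathrm{ord}_p 2)=v_2(a)+1$ deduced from $2^a\equiv-1\pmod p$: one must note explicitly that $e\mid 2a$ already pins down the odd part of $e$ as a divisor of $a$, so that $e\nmid a$ is purely a $2$-adic phenomenon. With that spelled out, both directions are a few lines.
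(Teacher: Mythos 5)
Your proof is correct. The direction ``same $2$-part $\Rightarrow$ not coprime'' is essentially the paper's argument: both exhibit an explicit common divisor via the identity $X+1\mid X^k+1$ for odd $k$ (the paper uses $2^{a_2}+1$ where $a_2$ is the $2$-part of $a$, you use the larger divisor $2^{\gcd(a,b)}+1$; same idea). The other direction is where you genuinely diverge. The paper bounds $\gcd(2^a+1,2^b+1)$ by $\gcd(2^{2a}-1,2^{2b}-1)=2^{2\gcd(a,b)}-1=(2^{\gcd(a,b)}-1)(2^{\gcd(a,b)}+1)$ and then checks that each factor is coprime to $2^a+1$ (the second because $a/\gcd(a,b)$ is even when the $2$-parts differ). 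You instead work prime by prime: a prime $p$ divides $2^a+1$ exactly when $v_2(\mathrm{ord}_p 2)=v_2(a)+1$, which immediately forces $v_2(a)=v_2(b)$ for any common prime divisor. Your valuation computation is sound ($e\mid 2a$ pins the odd part of $e$ inside the odd part of $a$, so $e\nmid a$ forces $v_2(e)=v_2(a)+1$). The order-theoretic route is arguably more conceptual --- it characterizes the prime divisors of $2^a+1$ outright, from which the lemma is instant --- while the paper's route stays entirely within gcd manipulations of Mersenne-type numbers and needs no reference to multiplicative orders. Either is acceptable here; nothing downstream in the paper depends on which proof is used.
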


\begin{proof}
Let $a_2$ and $b_2$ be the $2$-parts of $a$ and $b$. Note that $2^{a}+1 = (2^{a_2}+1)(2^{a-a_2}-2^{a-2a_2}+\cdots -2^{a_2}+1)$. The ``only if'' direction is immediate from this. For the converse, assume that $a_2>b_2$, and observe that $$\gcd(2^{a}+1,2^b+1)  \mid \gcd(2^{2a}-1, 2^{2b}-1) = 2^{2\gcd(a,b)}-1= (2^{\gcd(a,b)}-1)(2^{\gcd(a,b)}+1).$$ Since $2^{\gcd(a,b)}-1$ divides $2^a-1$, it is coprime to $2^a+1$. Also, $\gcd(a,b)$ is not divisible by $a_2$, so $a/\gcd(a,b)$ is even. Hence $2^{\gcd(a,b)}+1$ also divides $2^a-1$, so it is coprime to $2^a+1$. Therefore $\gcd(2^a+1,2^b+1)=1$. 
\end{proof}

\section{Local monodromy at $\infty$ of hypergeometric sheaves}

Let $\mathcal{H}$ be an irreducible hypergeometric sheaf over a field $\mathbb{F}_q$ of characteristic $2$ which is of type $(D,M)$ with $W:=D-M>0$ and whose geometric monodromy group $G:=G_{\geom}$ is an extraspecial normalizer of characteristic $2$. By \cite[Theorem 9.19(ii)]{KT2}, $G$ has a normal extraspecial $2$-subgroup $E\cong 2^{1+2n}_{\pm}$ where $2^n=D$, and $\mathbf{Z}(E)$ is the Sylow $2$-subgroup of $\mathbf{Z}(G)$; in other words, $R=E$. 

In this section, we determine which of the elements in \autoref{m2sp} can appear as $g_\infty$ of $\mathcal{H}$. We must take into account the structure of the image $Q$ of $P(\infty)$. 

\begin{prop}\label{3-possibilities}
At least one of the following holds:
\begin{enumerate}[label={\upshape{(\alph*)}}]
\item $\mathcal{H}$ is Kloosterman,
\item $Q\cap E=1$, so the image of $Q$ in $G/\mathbf{Z}(G)E$ is isomorphic to $Q$, or
\item $(Q\cap E)\mathbf{Z}(E)$ is elementary abelian, each irreducible constituent of $\Wild|_{Q\cap E}$ is nontrivial, and $(\dim \Tame)\cdot |\{Q\cap Q^e\cap E\mid e\in E\}|=D=2^n$.
\end{enumerate}

\end{prop}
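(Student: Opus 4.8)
The plan is to show that if $\mathcal{H}$ is not Kloosterman (so $M=\dim\Tame>0$) and $F:=Q\cap E\neq 1$, then (c) holds; if instead $Q\cap E=1$ then (b) holds (the stated isomorphism being then automatic). Write $c$ for the generator of $\mathbf{Z}(E)$; since $V|_E$ is the unique faithful irreducible representation of $E$, Schur's lemma shows $c$ acts as $-1$ on $V$. Note that $\Tame$ and $\Wild$ are $I(\infty)$-subrepresentations of $V$, hence $J$-submodules, and in particular $Q$- and $F$-submodules; moreover $\Tame$ is $Q$-trivial while $\Wild$ is totally wild at $\infty$, so $\Wild^Q=0$. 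Hence $V^Q=\Tame$ has dimension exactly $M$.

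First I would establish the group-theoretic part of (c). If $c\in Q$ then $c$ acts as $-1$ and $V^Q=0$, contradicting $M>0$; so $\mathbf{Z}(E)\not\le Q$, i.e. $F\cap\mathbf{Z}(E)=1$. Then $F$ embeds into the elementary abelian group $E/\mathbf{Z}(E)$, so $[F,F]\le F\cap\mathbf{Z}(E)=1$ and $F$ has no element of order $4$ (its square would lie in $F\cap\mathbf{Z}(E)$); thus $F$, and hence $F\mathbf{Z}(E)=F\times\mathbf{Z}(E)$, is elementary abelian. Put $r=\dim_{\mathbb{F}_2}F$.

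Next I would pin down $M$ in terms of $r$. Since $F\le E$ acts faithfully on $V$ and $\Tame|_F$ is trivial, $\Wild|_F$ is faithful. Write $\Wild|_Q=\bigoplus_{i=1}^{W_0}\tau_i$ with the $\tau_i$ pairwise non-isomorphic irreducibles. Because $Q\trianglelefteq J$ and $E\trianglelefteq G$, the subgroup $F=Q\cap E$ is normal in $J$, so $g_\infty$ normalizes $F$; and since $\Wild$ is irreducible over $I(\infty)$ with $J/Q$ cyclic, Clifford theory shows $g_\infty$ permutes $\{\tau_1,\dots,\tau_{W_0}\}$ transitively. If some $\tau_i|_F$ contained the trivial character, then (as $F$ is abelian and normal in $Q$) Clifford theory would force $F\le\ker\tau_i$; conjugating by powers of $g_\infty$ and using transitivity gives $F\le\ker\tau_j$ for all $j$, hence $F\le\bigcap_j\ker\tau_j$, which meets $F$ trivially because $\Wild|_F$ is faithful --- forcing $F=1$, a contradiction. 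So every constituent of $\Wild|_F$ is nontrivial (the second assertion of (c)), and the multiplicity of $\mathbf{1}_F$ in $V|_F$ equals $\dim\Tame=M$. On the other hand, every non-identity element of $F$ is non-central in $E$ (as $F\cap\mathbf{Z}(E)=1$), hence $E$-conjugate to its product with $c$, hence of trace $0$ on $V$; so this multiplicity also equals $\frac{1}{|F|}\sum_{f\in F}\Trace_V(f)=2^n/2^r$. Therefore $M=2^{n-r}$.

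Finally, for $e\in E$ we have $Q^e\cap E=(Q\cap E)^e=F^e$, and the commutator identities in the extraspecial group $E$ show that $f\mapsto[f,e]$ is a homomorphism $F\to\mathbf{Z}(E)$ and $e\mapsto(f\mapsto[f,e])$ is a homomorphism $E\to\operatorname{Hom}(F,\mathbf{Z}(E))$ with kernel $C_E(F)$; since $C_E(F)/\mathbf{Z}(E)=\bar F^{\perp}$ in the symplectic space $E/\mathbf{Z}(E)$ we get $[E:C_E(F)]=2^r=|\operatorname{Hom}(F,\mathbf{Z}(E))|$, so this homomorphism is onto, and $F^e$ runs exactly over the subgroups $F_\lambda:=\{f\lambda(f):f\in F\}$ for $\lambda\in\operatorname{Hom}(F,\mathbf{Z}(E))$. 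Using $F\cap\mathbf{Z}(E)=1$ one checks $F\cap F_\lambda=\ker\lambda$, and the subgroups $\ker\lambda$ are $F$ itself together with its $2^r-1$ hyperplanes, so $|\{Q\cap Q^e\cap E\mid e\in E\}|=2^r$. Combining, $(\dim\Tame)\cdot|\{Q\cap Q^e\cap E\mid e\in E\}|=2^{n-r}\cdot 2^r=2^n=D$, which completes (c). The main obstacle is the third paragraph: ruling out trivial constituents of $\Wild|_F$ (and thereby forcing $M=2^{n-r}$) requires combining the Clifford-theoretic transitivity of $g_\infty$ on the $\tau_i$ with the faithfulness of $\Wild|_F$; the remaining steps are routine computations inside the extraspecial group.
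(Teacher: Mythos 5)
Your proof is correct, and while it follows the same overall skeleton as the paper's (dispose of the Kloosterman and $Q\cap E=1$ cases, show $(Q\cap E)\cap\mathbf{Z}(E)=1$ so that $(Q\cap E)\mathbf{Z}(E)$ is elementary abelian, rule out trivial constituents of $\Wild|_{Q\cap E}$ by Clifford theory applied to $Q\cap E\unlhd Q\unlhd J$, then extract the numerical identity), it replaces two of the paper's ingredients with more self-contained arguments. First, to get $Q\cap\mathbf{Z}(E)=1$ the paper quotes \cite[Proposition 4.8]{KT2} (which gives $Q\cap\mathbf{Z}(G)=1$), whereas you observe directly that the central involution acts as $-1$ while $V^Q=\Tame\neq 0$; this is cleaner and avoids the external input. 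Second, for the identity $(\dim\Tame)\cdot|\{Q\cap Q^e\cap E\mid e\in E\}|=2^n$ the paper applies Clifford theory to $(Q\cap E)\mathbf{Z}(E)\unlhd E$: the linear constituents of $\Phi|_{(Q\cap E)\mathbf{Z}(E)}$ form a single $E$-orbit with common multiplicity $\dim\Tame$ and kernels of the form $Q^e\cap E$, and counting dimensions yields the product formula (the paper's write-up then only records the consequence $\dim\Tame\mid 2^n$, which is what it uses later). You instead compute the character of $\Phi|_{Q\cap E}$ directly --- it vanishes off the identity because every nontrivial element of $Q\cap E$ is $E$-conjugate to its product with the central involution --- which pins down $\dim\Tame=2^{n-r}$ with $r=\dim_{\mathbb{F}_2}(Q\cap E)$, and then enumerate the conjugates $F^e=F_\lambda$ and the intersections $F\cap F_\lambda=\ker\lambda$ via the commutator pairing to count exactly $2^r$ distinct intersections. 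The two computations are equivalent (your trace identity is precisely the statement that $\Phi|_{Q\cap E}$ is a multiple of the regular representation, which is what the paper's Clifford argument produces), but your version makes the set $\{Q\cap Q^e\cap E\}$ and the exact equality in (c) fully explicit. One minor remark: in ruling out trivial constituents you invoke the transitivity of $g_\infty$ on the $\tau_i$; the paper reaches the same conclusion slightly more economically from $Q\cap E\leq\bigcap_{j\in J}(\ker\Theta)^j=\ker(\Wild|_Q)$, using only that $Q\cap E$ is normal in $J$ --- both routes are valid.
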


\begin{proof}
Suppose that $\mathcal{H}$ is not in the cases (a) and (b), so that $Q\cap E$ is nontrivial and $\dim \Tame>0$. By \cite[Proposition 4.8]{KT2}, $Q\cap \mathbf{Z}(G)=1$ and $|\mathbf{Z}(G)|_2\leq 2$. Since $Q$ is a $2$-group and $\mathbf{Z}(E)\leq \mathbf{Z}(G)$, we have $Q\cap\mathbf{Z}(E)=1$. Therefore, $Q\cap E$ is isomorphic to its image in $E/\mathbf{Z}(E)$, so $(Q\cap E)\mathbf{Z}(E)$ is an elementary abelian $2$-group. 

Let $\Phi$ be the monodromy representation of $\mathcal{H}$. Consider its restrictions to the subgroups $Q\cap E \lhd (Q\cap E)\mathbf{Z}(E)\unlhd E$. Since $Q\cap E\unlhd Q\unlhd J$, the restriction $\Phi|_{Q\cap E}$ decomposes into $\Tame|_{Q\cap E}\oplus \Wild|_{Q\cap E} = (\dim \Tame)\cdot \mathds{1}_{Q\cap E}\oplus \Wild|_{Q\cap E}$. Hence, there exists some irreducible constituent $\Psi$ of $\Phi|_{(Q\cap E)\mathbf{Z}(E)}$ (which is one-dimensional since $(Q\cap E)\mathbf{Z}(E)$ is abelian as we saw above) whose restriction to $Q\cap E$ is trivial. On the other hand, since $\Phi|_E$ is irreducible and faithful, by Clifford theory, no irreducible constituent of its restriction to the normal subgroup $(Q\cap E)\mathbf{Z}(E)$ can contain a normal subgroup of $E$ in the kernel, so in particular $\Psi\neq \mathds{1}_{(Q\cap E)\mathbf{Z}(E)}$. Therefore $\ker\Psi = Q\cap E$, and each irreducible constituent of $\Phi|_{(Q\cap E)\mathbf{Z}(E)}$ has kernel $Q^e\cap E$ for some $e\in E$. In particular, no irreducible constituent of $\Phi|_{(Q\cap E)\mathbf{Z}(E)}$ other than $\Psi$ restricts to $\mathds{1}_{Q\cap E}$, so the multiplicity of $\Psi$ in $\Phi|_{(Q\cap E)\mathbf{Z}(E)}$ is exactly the multiplicity of $\mathds{1}_{Q\cap E}$ in $\Phi|_{Q\cap E}$, and it divides $\dim \Phi$.

Now consider the restriction of the irreducible representation $\Wild$ of $J$ to the normal subgroups $Q\cap E\unlhd Q\unlhd J$. If $\Wild|_{Q\cap E}$ has $\mathds{1}_{Q\cap E}$ as an irreducible constituent, then there exists an irreducible constituent $\Theta$ of $\Wild|_Q$ with $Q\cap E\leq \ker\Theta$. Since $Q\cap E\unlhd J$, it follows that $Q\cap E\leq \bigcap_{j\in J}(\ker\Theta)^j= \ker \Wild|_Q$. This is impossible since $\Phi$ is faithful and $Q\leq \ker\Tame$. Therefore, $\Wild|_{Q\cap E}$ cannot have $\mathds{1}_{Q\cap E}$ as an irreducible constituent, so the multiplicity of $\mathds{1}_{Q\cap E}$ in $\Phi|_{Q\cap E}$ is exactly $\dim \Tame$. By the previous paragraph, this number divides $\dim \Phi=D=2^n$.
\end{proof}

As we now know what are the possible intersections $Q\cap E$, it is natural to ask what are the possible quotients $Q/(Q\cap E)$. As $g_\infty$ normalizes $Q$, we can simply find all $2$-subgroups of $G/\mathbf{Z}(G)E$ which are normalized by the elements described in \autoref{m2sp}.

\begin{prop}\label{Q}
Let $a_1,\dots, a_t$ be positive integers such that $a_1+\cdots+a_t=n$, and let $g=(g_1,\dots,g_t)$ be an element of $\Sp_{2a_1}(2)\times \cdots \times \Sp_{2a_t}(2)\leq \Sp_{2n}(2)$. Suppose that $P$ is a $2$-subgroup of $\Sp_{2n}(2)$ such that $g\in \mathbf{N}_{\Sp_{2n}(2)}(P)$.
\begin{enumerate}[label={\upshape{(\alph*)}}]
\item Suppose that $g_1,\dots,g_t$ are irreducible over $\mathbb{F}_2$, have distinct orders $o(g_i)$, and every nonzero orbit of the action of $g_i$ on $\mathbb{F}_2^{2a_i}$ has length $o(g_i)$. Then $P$ must be trivial.
\item Suppose that $g_1,\dots,g_{t-1}$ are irreducible over $\mathbb{F}_2$, have distinct orders $o(g_i)$, and every nonzero orbit of the action of $g_i$ on $\mathbb{F}_2^{2a_i}$ has length $o(g_i)$. Also suppose that $V_t=V_{t_1}\oplus V_{t_2}$ for some $a_t$-dimensional subspaces $V_{t_1}$ and $V_{t_2}$, $g_{t}=g_{t,1}\oplus g_{t,2}\in (\GL(V_{t_1})\times\GL(V_{t_2}))\cap \Sp_{2a_t}(2)$ for some $g_{t,1}$ and $g_{t,2}$ which are irreducible over $\mathbb{F}_2$, has same order $o(g_t)$ which is not equal to any other $o(g_i)$, and every nonzero orbit of the actions of $g_{t,1}$ and $g_{t,2}$ on $\mathbb{F}_2^{a_t}$ has length $o(g_t)$. Then $P$ must be elementary abelian of order not exceeding $2^{a_t^2}$.
\end{enumerate}

\end{prop}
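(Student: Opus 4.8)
The plan is to study $P$ via coprime action. First, $g$ has odd order: each $g_i$ ($i<t$), being irreducible over $\mathbb{F}_2$ on $\mathbb{F}_2^{2a_i}$, has order dividing $2^{2a_i}-1$, and likewise each $g_{t,j}$ has order dividing $2^{a_t}-1$, so $o(g)$ is the $\operatorname{lcm}$ of these odd numbers. Thus $\langle g\rangle$ acts coprimely on the $2$-group $P$. Everything takes place inside the natural symplectic module $V=\mathbb{F}_2^{2n}=V_1\perp\cdots\perp V_t$, on which $g$ and $P$ act symplectically, and I will repeatedly use: (i) a nontrivial $2$-group acting on a nonzero $\mathbb{F}_2$-space fixes a nonzero vector; (ii) for any subgroup $X\le\Sp(W)$, $[W,X]=C_W(X)^{\perp}$ (both inclusions follow from $\langle xv,w\rangle=\langle v,x^{-1}w\rangle$).

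The heart of both parts is a multiplicity-free lemma: \emph{if $x\in\Sp(W)$ has odd order and $W$ is an orthogonal direct sum of pairwise non-isomorphic nondegenerate $x$-invariant subspaces on each of which $x$ acts irreducibly, then $x$ normalizes no nontrivial $2$-subgroup of $\Sp(W)$.} To see this, suppose such a $P\ne 1$ and set $A=\Omega_1(\mathbf{Z}(P))$, elementary abelian and $x$-invariant; then $C_W(A)$ is an $x$-submodule, nonzero by (i) and proper since $A\ne 1$ acts faithfully, hence a proper nonzero partial sum $\bigoplus_{i\in S}W_i$; since each $W_i$ is nondegenerate, $C_W(A)^{\perp}=\bigoplus_{i\notin S}W_i$, so by (ii) $W=C_W(A)\oplus[W,A]$, whence $C_{[W,A]}(A)=C_W(A)\cap[W,A]=0$ while $[W,A]\ne 0$, contradicting (i). Part (a) is then immediate: distinct orders force $V_1,\dots,V_t$ to be pairwise non-isomorphic irreducible $\mathbb{F}_2[g]$-modules (an isomorphism would make $g_i,g_j$ conjugate), so $V|_g$ is multiplicity-free and $P=1$.

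For part (b) I may assume $P\ne 1$. If $V_{t_1}\not\cong V_{t_2}$ as $\mathbb{F}_2[g]$-modules, then $V|_g$ is still multiplicity-free and $P=1$ by the lemma. So assume $V_{t_1}\cong V_{t_2}$; then $V_t\cong\mathbb{F}_q^2$ ($q=2^{a_t}$) with $g$ acting as an $\mathbb{F}_q$-scalar, and $Y:=V_1\perp\cdots\perp V_{t-1}$ gives $V=Y\perp V_t$ with $Y|_g$ multiplicity-free and containing no constituent isomorphic to $V_{t_1}$ (as $o(g_i)\ne o(g_t)$ for $i<t$). The key reduction is that \emph{$P$ centralizes $Y$}; granting it, $P$ fixes $Y$ pointwise, hence preserves $Y^{\perp}=V_t$ and embeds in $\Sp(V_t)$ (the kernel acts trivially on $V$), normalized by the scalar $g|_{V_t}$. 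Then $C_{V_t}(P)$ is $\mathbb{F}_q$-stable, nonzero and proper, so an $\mathbb{F}_q$-line $L$; $[V_t,P]=L^{\perp}$ is again an $\mathbb{F}_q$-line; and $L\cap L^{\perp}\ne 0$ (else $C_{[V_t,P]}(P)=0$ with $[V_t,P]\ne 0$, contradicting (i)), forcing $L=L^{\perp}$ and $[V_t,P]=L$. Hence each $p\in P$ is $1+\varphi$ with $\varphi\in\operatorname{Hom}(V_t/L,L)$; such elements form an elementary abelian group (because $\varphi^2=0$ and $\varphi\psi=0$ since $\varphi|_L=0$) of order $2^{a_t^2}$, so $P$ is elementary abelian of order at most $2^{a_t^2}$.

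The main obstacle is the reduction ``$P$ centralizes $Y$'', which I would approach by coprime action $P=[P,g]\,C_P(g)$. Here $C_P(g)$ lies in a Sylow $2$-subgroup of $C_{\Sp(V)}(g)=C_{\Sp(Y)}(g_Y)\times C_{\Sp(V_t)}(g_t)$, and $C_{\Sp(Y)}(g_Y)=\prod_{i<t}C_{\Sp(V_i)}(g_i)$ has odd order (each factor cyclic of order dividing $2^{2a_i}-1$), so every Sylow $2$-subgroup of $C_{\Sp(V)}(g)$ — and hence $C_P(g)$ — acts trivially on $Y$. For $[P,g]$ one tracks the ($g$-invariant) fixed-point and commutator filtrations of $P$ on $V$: the relevant subquotients are again of the type governed by the multiplicity-free lemma, and ``restriction to $Y$'' gives a $g$-equivariant map from $P$ into $\operatorname{Hom}_{\mathbb{F}_2}(Y,V_t)$ whose image lies in $\operatorname{Hom}(Y,U)$ for a suitable $g$-submodule $U\subseteq V_t$; since $o(g_i)\ne o(g_t)$ for all $i<t$, this $\mathbb{F}_2[g]$-module has no nonzero $g$-fixed vector, and combining this with the coprime action of $\langle g\rangle$ on $P$ and the self-adjointness of $p-1$ with respect to the form should force the image to be $0$. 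Making this last point airtight — excluding a nontrivial fixed-point-free action of $[P,g]$ on $Y$ — is the technical core; the remainder is bookkeeping with fixed spaces, commutator spaces and perpendiculars in the symplectic module.
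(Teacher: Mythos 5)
Your multiplicity-free lemma is correct as stated, and it yields a clean and genuinely different proof of part (a): the paper instead pins down the flag of iterated fixed spaces and then kills all off-diagonal blocks by a direct computation with the symplectic Gram matrix, whereas your argument via $[W,X]=C_W(X)^{\perp}$ and coprime semisimplicity is shorter and does not even use the orbit-length hypothesis. Part (b), however, has two genuine problems.

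First, the branch ``$V_{t_1}\not\cong V_{t_2}$'' is handled incorrectly. Your lemma needs the irreducible summands to be pairwise orthogonal and nondegenerate, and nothing forces this for $V_{t_1},V_{t_2}$: they can be totally isotropic and in perfect duality while non-isomorphic (namely when $V_{t_2}\cong V_{t_1}^{*}\not\cong V_{t_1}$, which is exactly the configuration produced by the maximal torus $C_{2^{a_t}-1}<\mathbf{O}_{2a_t}^{+}$ in \autoref{m2sp} whenever $V_{t_1}$ is not self-dual). In that configuration the conclusion $P=1$ is false. Concretely, take $a_t=3$, $V_t=\mathbb{F}_8\oplus\mathbb{F}_8$ with the symplectic form $\langle(x,y),(x',y')\rangle=\operatorname{Tr}(xy')+\operatorname{Tr}(x'y)$ and $g_t(x,y)=(\lambda x,\lambda^{-1}y)$ for a generator $\lambda$ of $\mathbb{F}_8^{\times}$. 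Then $g_t$ satisfies all hypotheses of (b), $V_{t_1}\not\cong V_{t_2}$ because $7\nmid 2^k+1$ for all $k$, and yet $P=\{(x,y)\mapsto(x+cy,\,y)\mid c\in\mathbb{F}_8\}$ is a nontrivial elementary abelian $2$-subgroup of $\Sp_6(2)$ normalized by $g_t$ (conjugation by $g_t$ sends $c$ to $c\lambda^{2}$). So this case cannot be dispatched by the lemma; it needs the same analysis as your main case, with $L=V_{t_1}$ or $V_{t_2}$.

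Second, in the main case everything hinges on the reduction ``$P$ centralizes $Y$'', which you do not prove. The coprime factorization $P=[P,g]\,C_P(g)$ does dispose of $C_P(g)$, but the treatment of $[P,g]$ is only a gesture: elements of $P$ need not preserve the decomposition $Y\perp V_t$ (only the flag of iterated fixed spaces), so $(p-1)|_{Y}$ does not a priori land in $V_t$ and your map $P\to\operatorname{Hom}_{\mathbb{F}_2}(Y,V_t)$ is not yet defined, let alone a $g$-equivariant homomorphism; moreover one must also rule out nonzero components in $\operatorname{Hom}(V_i,V_j)$ for $i,j<t$, which your sketch does not address. This reduction is precisely the technical core of the statement --- it is what the paper's page-long computation with the symplectic form accomplishes --- and as written the proposal does not contain a proof of it.
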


\begin{proof}
Let $A_m$ be the $m\times m$ matrix $\begin{pmatrix}
& & 1 \\ & \reflectbox{$\ddots$} & \\ 1 & &
\end{pmatrix}$. Let $V=V_1\oplus \cdots \oplus V_t$ be the natural module of $\Sp_{2a_1}(2)\times \cdots \times \Sp_{2a_t}(2)$ with the symplectic form $A_{2a_1}\oplus\cdots\oplus A_{2a_t}$. 

(a) We first show that $V_1,\dots,V_t$ are the minimal $\langle g\rangle$-submodules of $V$. Since each $g_i$ is irreducible, it is clear that each $V_i$ is a minimal $\langle g\rangle$-submodule of $V$. Conversely, suppose that a minimal submodule $U$ contains a vector $v=(v_1,\dots,v_t)\in V_1\oplus \cdots \oplus V_t$ with at least two nonzero components $v_i, v_j$. We may assume that $o(g_i)<o(g_j)$. Then $g^{o(g_i)}v-v = (g_1^{o(g_i)}v_1-v_1, \dots, g_t^{o(g_t)}v_t-v_t)\in M$. The $i$th component of this vector is $g_i^{o(g_i)}v_i-v_i=0$, while the $j$th component is nonzero. We may repeat this until we get a vector with only one nonzero component. Therefore, $M$ intersects at least one of $V_1,\dots, V_t$ nontrivially, so by the minimality, $M$ is one of them.

Since $g_1,\dots,g_t$ are irreducible over $\mathbb{F}_2$ and have pairwise coprime orders, the minimal $\langle g\rangle$-submodules of $V$ are $V_1,\dots,V_t$. Since $P$ is a $2$-subgroup of $\GL_{2n}(2)$ normalized by $g$, $\mathbf{C}_V(P)$ is a nontrivial $\langle g\rangle$-submodule of $V$, so it must contain some $V_i$. After relabeling if necessary, we may assume that $V_1\subseteq \mathbf{C}_V(P)$. Now the action of $P$ on $V/V_1$ fixes a nontrivial $\langle g\rangle$-submodule by the same reason. Repeating this, we can see that after relabeling, $$P\leq \left\lbrace\left.\begin{pmatrix}
I_{2a_1} & X_{1,2} & \cdots &X_{1,t} \\ 0 & I_{2a_2} & \cdots & X_{2,t} \\0 &0 & \ddots & \vdots \\ 0&0&0&I_{2a_t}
\end{pmatrix} \right| X_{i, j}\in M_{2a_i\times 2a_j}(\mathbb{F}_2) \right\rbrace.$$ On the other hand, the elements of $P$ preserves the symplectic form $A_{2a_1}\oplus\cdots\oplus A_{2a_t}$, so 
\begin{align*}
&\begin{pmatrix}
A_{2a_1} &  & & \\  & A_{2a_2} &  &  \\ & & \ddots &  \\ &&&A_{2a_t}
\end{pmatrix}\\=&\begin{pmatrix}
I_{2a_1} &  &  & \\ X_{1,2}^T & I_{2a_2} &  &  \\\vdots &\vdots & \ddots &  \\ X_{1,t}^T&X_{2,t}^T&\cdots&I_{2a_t}
\end{pmatrix}\begin{pmatrix}
A_{2a_1} &  & & \\  & A_{2a_2} &  &  \\ & & \ddots &  \\ &&&A_{2a_t}
\end{pmatrix}\begin{pmatrix}
I_{2a_1} & X_{1,2} & \cdots &X_{1,t} \\ 0 & I_{2a_2} & \cdots & X_{2,t} \\0 &0 & \ddots & \vdots \\ 0&0&0&I_{2a_t}
\end{pmatrix}\\=& \begin{pmatrix}
A_{2a_1} & & &\\X_{1,2}^TA_{2a_1} & A_{2a_2} & & \\ \vdots & \vdots & \ddots & \\ X_{1,t}^TA_{2a_1} & X_{2,t}^TA_{2a_2} & \cdots & A_{2a_t}
\end{pmatrix}\begin{pmatrix}
I_{2a_1} & X_{1,2} & \cdots &X_{1,t} \\ 0 & I_{2a_2} & \cdots & X_{2,t} \\0 &0 & \ddots & \vdots \\ 0&0&0&I_{2a_t}
\end{pmatrix}\\=&\begin{pmatrix}
A_{2a_1} & A_{2a_1}X_{1,2} & \cdots & A_{2a_1}X_{1,t} \\ X_{1,2}^TA_{2a_1} & X_{1,2}^TA_{2a_1}X_{1,2} + A_{2a_2} & \cdots & X_{1,2}^TA_{2a_1}X_{1,t} + A_{2a_1}X_{2,t} \\ \vdots & \vdots & \ddots & \vdots \\ X_{1,t}^TA_{2a_1} & X_{1,t}^TA_{2a_1}X_{1,2} + X_{2,t}^TA_{2a_2} & \cdots & X_{1,t}^TA_{2a_1}X_{1,t} + \cdots + A_{2a_t}
\end{pmatrix}
\end{align*}
which shows that $X_{i,j}=0$ for all pairs $(i,j)$. Therefore $P$ is trivial.

(b) By the same argument as in (a), we see that the minimal $g_\infty$-invariant subspaces of $V$ are $V_1,\dots,V_{t-1}$ and certain subspaces of $V_t$. Moreover, the minimal polynomials of $g_{t,1}$ and $g_{t,2}$ are both irreducible of degree $a_t$, so their least common multiple is the minimal polynomial of $g_t$. Hence, every minimal $g_\infty$-invariant subspace of $V_t$ has dimension $a_t$. Therefore, for a suitable choice of basis, we have $$P\leq \left\lbrace\begin{pmatrix}
I_{2a_1} & X_{1,2} & \cdots &X_{1,t-1} & X_{1,t} & X_{1,t+1} &\\ 0 & I_{2a_2} & \cdots & X_{2,t-1} & X_{2,t} & X_{2,t+1} \\\vdots &\vdots & \ddots & \vdots & \vdots & \vdots \\ 0&0&\cdots&I_{2a_{t-1}} & X_{t-1,t} &X_{t-1,t+1} \\ X_{t,1} & X_{t,2} & \cdots & X_{t,t-1} & I_{a_t} & X_{t,t+1}\\ X_{t+1,1} & X_{t+1,2} & \cdots & X_{t+1,t-1} &  0 & I_{a_t}
\end{pmatrix} \right\rbrace$$ where the symplectic form in this basis becomes $A_{2a_1}\oplus\cdots\oplus A_{2a_{t-1}}\oplus B$ for some symplectic form $B =\begin{pmatrix}
B_{1} & B_{2} \\ B_{2}^T & B_{3}
\end{pmatrix}$ on $\mathbb{F}_2^{2a_t}$, where $B_1, B_2, B_3$ are $a_t\times a_t$ matrices. Also, there exists $0\leq i_1\leq i_2\leq t-1$ such that $X_{i,t}=0$ for all $i> i_1$, $X_{i,t+1}=0$ for all $i> i_2$, $X_{t,i}=0$ for all $i\leq i_1$, and $X_{t+1,i}=0$ for all $i\leq i_2$. In particular, for each $i=1,\dots,t-1$, at least one of $X_{i,t}$ and $X_{t,i}$ is $0$, and at least one of $X_{i,t+1}$ and $X_{t+1,i}$ is $0$.

Since each element of $P$ preserves the symplectic form, we have the following equality:
{\scriptsize
\begin{align*}
&\begin{pmatrix}
A_{2a_1} &  & && &\\  & A_{2a_2} &  & && \\ & & \ddots & & &\\ &&&A_{2a_{t-1}}&&\\&&&&B_1&B_2\\&&&&B_2^T&B_3
\end{pmatrix}\\=&\left(\begin{smallmatrix}
I_{2a_1} &  &  & & X_{t,1}^T & X_{t+1,1}^T\\ X_{1,2}^T & I_{2a_2} &  &  & X_{t,2}^T & X_{t+1,2}^T\\\vdots &\vdots & \ddots & & \vdots & \vdots\\ X_{1,t-1}^T&X_{2,t-1}^T&\cdots&I_{2a_{t-1}}&X_{t,t-1}^T & X_{t+1,t-1}^T\\ X_{1,t}^T & X_{2,t}^T & \cdots & X_{t-1,t}^T & I_{a_t} & 0 \\ X_{1,t+1}^T & X_{2,t+1}^T & \cdots & X_{t-1,t+1}^T & X_{t,t+1}^T & I_{a_t}
\end{smallmatrix}\right)\left(\begin{smallmatrix}
A_{2a_1} &  & && &\\  & A_{2a_2} &  & && \\ & & \ddots & & &\\ &&&A_{2a_{t-1}}&&\\&&&&B_1&B_2\\&&&&B_2^T&B_3
\end{smallmatrix}\right)\left(\begin{smallmatrix}
I_{2a_1} & X_{1,2} & \cdots &X_{1,t-1} & X_{1,t} & X_{1,t+1} &\\ 0 & I_{2a_2} & \cdots & X_{2,t-1} & X_{2,t} & X_{2,t+1} \\\vdots &\vdots & \ddots & \vdots & \vdots & \vdots \\ 0&0&\cdots&I_{2a_{t-1}} & X_{t-1,t} &X_{t-1,t+1} \\ X_{t,1} & X_{t,2} & \cdots & X_{t,t-1} & I_{a_t} & X_{t,t+1}\\ X_{t+1,1} & X_{t+1,2} & \cdots & X_{t+1,t-1} &  0 & I_{a_t}
\end{smallmatrix}\right)\\=& \left(\begin{smallmatrix}
A_{2a_1} & 0&\cdots &0 & X_{t,1}^TB_1+X_{t+1,1}^TB_2^T & X_{t,1}^TB_2+X_{t+1,1}^TB_3\\X_{1,2}^TA_{2a_1} & A_{2a_2} & \cdots &0&X_{t,2}^TB_1 + X_{t+1,2}^TB_2^T & X_{t,2}^TB_2 + X_{t+1,2}^TB_3 \\ \vdots & \vdots & \ddots & \vdots & \vdots & \vdots\\ X_{1,{t-1}}^TA_{2a_1} & X_{2,{t-1}}^TA_{2a_2} & \cdots & A_{2a_{t-1}} & X_{t,t-1}^TB_1 + X_{t+1,t-1}^TB_2^T & X_{t,t-1}^TB_2+X_{t+1,t-1}^TB_3 \\ X_{1,t}^TA_{2a_1} & X_{2,t}^TA_{2a_2} & \cdots & X_{t-1,t}^TA_{2a_{t-1}} & B_1 & B_2 \\ X_{1,t+1}^TA_{2a_1} & X_{2,t+1}^TA_{2a_2} & \cdots & X_{t-1,t+1}^TA_{2a_{t-1}} & X_{t,t+1}^TB_1 + B_2^T & X_{t,t+1}^TB_2 + B_3
\end{smallmatrix}\right)\left(\begin{smallmatrix}
I_{2a_1} & X_{1,2} & \cdots &X_{1,t-1} & X_{1,t} & X_{1,t+1} &\\ 0 & I_{2a_2} & \cdots & X_{2,t-1} & X_{2,t} & X_{2,t+1} \\\vdots &\vdots & \ddots & \vdots & \vdots & \vdots \\ 0&0&\cdots&I_{2a_{t-1}} & X_{t-1,t} &X_{t-1,t+1} \\ X_{t,1} & X_{t,2} & \cdots & X_{t,t-1} & I_{a_t} & X_{t,t+1}\\ X_{t+1,1} & X_{t+1,2} & \cdots & X_{t+1,t-1} &  0 & I_{a_t}
\end{smallmatrix}\right)
\end{align*}
}
If $i_2=0$, then $X_{i,t}=X_{i,t+1}=0$ for all $i=1,\dots,t-1$. If $i_1>0$, then $X_{t,1}=X_{t+1,1}=0$, so by computing the first row of the product above, we get $X_{1,2}=\cdots=X_{1,t+1}=0$. If $i_1=0<i_2$, then $X_{i,t}=X_{t+1,1}=0$ for $i=1,\dots,t-1$, so by computing the $(1,t+1)$- and $(t,t+1)$-entries, we get $A_{2a_1}X_{1,t+1} + X_{t,1}^TB_1X_{t,t+1} + X_{t,1}^TB_2=0$ and $B_1X_{t,t+1}+B_2=0$, so that $X_{1,t+1}=0$. Therefore, regardless of the numbers $i_1$ and $i_2$ and the symplectic form $B$, we always have $X_{1,t}=X_{1,t+1}=0$. 

Similarly, assume that $X_{j,t}=X_{j,t+1}=0$ for all $j<i$. If $i_2< i$, then $X_{i,t}=X_{i,t+1}=0$. If $i_1\geq i$, then $X_{t,j}=X_{t+1,j}=0$ for $j=1,\dots,i$, so by computing the $(i,t)$- and $(i,t+1)$-entry, we get $A_{2a_i}X_{i,t}=0$ and $A_{2a_i}X_{i,t+1}=0$. If $i_1<i\leq i_2$, then $X_{k,t}=X_{t+1,j}=0$ for $k=i,\dots,t-1$ and $j=1,\dots, i$, so by computing the $(i,t+1)$ and $(t,t+1)$-entries, we get $A_{2a_i}X_{i,t+1} + X_{t,i}^TB_1X_{t,t+1}+ X_{t,i}^TB_2 =0$ and $B_1X_{t,t+1}+B_2=0$, so that $A_{2a_i}X_{i,t+1}=0$. Therefore, in any case, we get $X_{i,t}=X_{i,t+1}=0$. By induction we get $X_{i,t}=X_{i,t+1}=0$ for all $i\in \{1,\dots,t-1\}$. The above matrix now becomes 
\begin{align*}
\left(\begin{smallmatrix}
A_{2a_1} & 0&\cdots &0 & X_{t,1}^TB_1+X_{t+1,1}^TB_2^T & X_{t,1}^TB_2+X_{t+1,1}^TB_3\\X_{1,2}^TA_{2a_1} & A_{2a_2} & \cdots &0&X_{t,2}^TB_1 + X_{t+1,2}^TB_2^T & X_{t,2}^TB_2 + X_{t+1,2}^TB_3 \\ \vdots & \vdots & \ddots & \vdots & \vdots & \vdots\\ X_{1,{t-1}}^TA_{2a_1} & X_{2,{t-1}}^TA_{2a_2} & \cdots & A_{2a_{t-1}} & X_{t,t-1}^TB_1 + X_{t+1,t-1}^TB_2^T & X_{t,t-1}^TB_2+X_{t+1,t-1}^TB_3 \\ 0 & 0 & \cdots & 0 & B_1 & B_2 \\0 & 0 & \cdots &0 & X_{t,t+1}^TB_1 + B_2^T & X_{t,t+1}^TB_2 + B_3
\end{smallmatrix}\right)\left(\begin{smallmatrix}
I_{2a_1} & X_{1,2} & \cdots &X_{1,t-1} & 0 & 0 &\\ 0 & I_{2a_2} & \cdots & X_{2,t-1} & 0 & 0 \\\vdots &\vdots & \ddots & \vdots & \vdots & \vdots \\ 0&0&\cdots&I_{2a_{t-1}} & 0 &0 \\ X_{t,1} & X_{t,2} & \cdots & X_{t,t-1} & I_{a_t} & X_{t,t+1}\\ X_{t+1,1} & X_{t+1,2} & \cdots & X_{t+1,t-1} &  0 & I_{a_t}
\end{smallmatrix}\right)
\end{align*}
By computing the $t$th column, we immediately see that $X_{t,i}^TB_1+X_{t+1,i}^TB_2^T=0$ for $i=1,\dots,t-1$ and $X_{t,t+1}^TB_1=0$. Then we can also compute the $t+1$th column and see that $X_{t,i}^TB_2+X_{t+1,i}^TB_3=0$ for $i=1,\dots,t-1$ and $B_2^TX_{t,t+1}+X_{t,t+1}^TB_2=0$. The matrix is now
\begin{align*}
\left(\begin{smallmatrix}
A_{2a_1} & 0&\cdots &0 & 0& 0\\X_{1,2}^TA_{2a_1} & A_{2a_2} & \cdots &0&0& 0 \\ \vdots & \vdots & \ddots & \vdots & \vdots & \vdots\\ X_{1,{t-1}}^TA_{2a_1} & X_{2,{t-1}}^TA_{2a_2} & \cdots & A_{2a_{t-1}} & 0 & 0 \\ 0 & 0 & \cdots & 0 & B_1 & B_2 \\0 & 0 & \cdots &0 &  B_2^T & X_{t,t+1}^TB_2 + B_3
\end{smallmatrix}\right)\left(\begin{smallmatrix}
I_{2a_1} & X_{1,2} & \cdots &X_{1,t-1} & 0 & 0 &\\ 0 & I_{2a_2} & \cdots & X_{2,t-1} & 0 & 0 \\\vdots &\vdots & \ddots & \vdots & \vdots & \vdots \\ 0&0&\cdots&I_{2a_{t-1}} & 0 &0 \\ X_{t,1} & X_{t,2} & \cdots & X_{t,t-1} & I_{a_t} & X_{t,t+1}\\ X_{t+1,1} & X_{t+1,2} & \cdots & X_{t+1,t-1} &  0 & I_{a_t}
\end{smallmatrix}\right)
\end{align*}
so it is now straightforward to check that $X_{i,j}=0$ for $1\leq i<j\leq t-1$. Therefore, we get 
$$P\leq \left.\left\lbrace\begin{pmatrix}
I_{2a_1} & &  & &  &  &\\  & I_{2a_2} &  &  &  &  \\&& \ddots &  &&  \\ &&&I_{2a_{t-1}} &  & \\ &  &  & & I_{a_t} & X_{t,t+1}\\ &  & &  &  & I_{a_t}
\end{pmatrix}\right| \begin{matrix}X_{t,t+1}\in M_{a_t,a_t}(2),\\ X_{t,t+1}^TB_1=0, \\B_2^TX_{t,t+1}=(B_2^TX_{t,t+1})^T\end{matrix} \right\rbrace$$ so $P$ is an elementary abelian $2$-group of order at most $2^{a_t^2}$.

\end{proof}

\begin{prop}\label{Wild_lower_bound}
$W/D\geq (2-\sqrt{2})/4>0.146$. If $W$ is even, then $W/D\geq 7(2-\sqrt{2})/16 > 0.256$. In general, $W/D> ((2-\sqrt{2})/2)(1-1/W_0)$, where $W_0$ is the $2'$-part of $W$.
\end{prop}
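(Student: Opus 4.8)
The plan is to apply \autoref{3-possibilities}, dispose quickly of the Kloosterman case and of case (c), and in case (b) (where $Q\cap E=1$) obtain the bound from a character estimate for the action of $Q$ on the $D$-dimensional space $V=\mathbb{C}^{2^n}$ underlying $\mathcal H$.

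First the easy cases. If $\mathcal{H}$ is Kloosterman then $M=0$, so $W=D$ and all three inequalities are trivial. If we are in case (c) of \autoref{3-possibilities}, then, as shown in its proof, $M=\dim\Tame$ divides $D=2^{n}$; since $W>0$ we have $1\le M<D$, hence $M\le D/2$ and $W/D\ge 1/2$, which exceeds all three claimed bounds. So from now on assume $\mathcal{H}$ is not Kloosterman and $Q\cap E=1$. Then $Q$ is a nontrivial $2$-group --- nontrivial because $\Wild$ has Swan conductor $1$, so $\mathcal{H}$ is wild at $\infty$ --- and by \autoref{3-possibilities}(b), together with the identification of $G/\mathbf{Z}(G)E$ with a subgroup of $\mathbf{O}^{\epsilon}_{2n}(2)\le \GL(\bar E)$, where $\bar E:=E/\mathbf{Z}(E)\cong\mathbb{F}_{2}^{2n}$, the conjugation action of $Q$ on $\bar E$ is faithful. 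Since $\Wild$ is totally wild, $\Wild|_{Q}$ has no trivial constituent, so $\dim\mathbf{C}_V(Q)=M$.

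The key estimate I would prove is that $|\Trace(q\mid V)|\le D/\sqrt{2}$ for every $q\in Q\setminus\{1\}$. For this, view $V\otimes V^{*}$ as an $E$-module: since $V|_{E}$ is the faithful irreducible representation of the extraspecial group $E$, its character is supported on $\mathbf{Z}(E)$, so the character of $V\otimes V^{*}|_{E}$, namely $e\mapsto|\Trace(e\mid V)|^{2}$, equals $D^{2}$ on $\mathbf{Z}(E)$ and $0$ elsewhere; hence it factors through $\bar E$ and is the regular character of $\bar E$, so $V\otimes V^{*}=\bigoplus_{\chi\in\widehat{\bar E}}L_{\chi}$ with each $L_{\chi}$ the one-dimensional $\chi$-eigenspace for conjugation by $E$. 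As $Q$ normalizes $E$, conjugation by $q$ sends $L_{\chi}$ to $L_{\chi'}$, where $\chi'$ is obtained from $\chi$ by the (dual) action of $\bar q$; in particular the fixed lines correspond to the $\bar q$-fixed characters of $\bar E$, of which there are $|\mathbf{C}_{\bar E}(\bar q)|$. Hence $\Trace(q\mid V\otimes V^{*})$ is a sum of $|\mathbf{C}_{\bar E}(\bar q)|$ roots of unity, and since $\Trace(q\mid V\otimes V^{*})=|\Trace(q\mid V)|^{2}$ (as $q$ has finite order), we get $|\Trace(q\mid V)|^{2}\le|\mathbf{C}_{\bar E}(\bar q)|$. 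Finally $\bar q\neq 1$ by faithfulness, so $\dim\mathbf{C}_{\bar E}(\bar q)\le 2n-1$ and $|\Trace(q\mid V)|^{2}\le 2^{2n-1}=D^{2}/2$.

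It remains to average and bookkeep. We have $M=\dim\mathbf{C}_V(Q)=\tfrac{1}{|Q|}\sum_{q\in Q}\Trace(q\mid V)\le\tfrac{1}{|Q|}\bigl(D+(|Q|-1)D/\sqrt{2}\bigr)$, which rearranges to $W/D=1-M/D\ge\bigl(1-\tfrac{1}{\sqrt{2}}\bigr)\bigl(1-\tfrac{1}{|Q|}\bigr)=\tfrac{2-\sqrt{2}}{2}\bigl(1-\tfrac{1}{|Q|}\bigr)$. Since $|Q|\ge 2$ this already gives $W/D\ge(2-\sqrt{2})/4$. For the general bound, $\Wild|_{Q}$ is a sum of $W_{0}$ pairwise non-isomorphic irreducible $Q$-modules, so $W_{0}\le|\Irr(Q)|\le|Q|$; since $|Q|$ is a power of $2$ while $W_{0}$ is odd, either $W_{0}=1$ (and $W/D>0$ is trivial) or $|Q|>W_{0}$, and then $W/D\ge\tfrac{2-\sqrt 2}{2}(1-\tfrac{1}{|Q|})>\tfrac{2-\sqrt 2}{2}(1-\tfrac{1}{W_{0}})$. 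If moreover $W$ is even, write $W=2^{a}W_{0}$ with $a\ge 1$; the dimensions of the $W_{0}$ constituents of $\Wild|_{Q}$ sum to $W>W_{0}$, so one of them is at least $2$, forcing $Q$ nonabelian and hence $|Q|\ge 8$, so $W/D\ge\tfrac{2-\sqrt 2}{2}\cdot\tfrac{7}{8}=\tfrac{7(2-\sqrt 2)}{16}$. The one step carrying real content is the trace estimate $|\Trace(q\mid V)|\le D/\sqrt 2$ --- this is where the constant $\sqrt{2}$ comes from; the reductions via \autoref{3-possibilities} and the concluding arithmetic are routine.
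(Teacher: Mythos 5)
Your proposal is correct and takes essentially the same route as the paper: the paper's proof simply defers to the first part of the proof of \cite[Theorem 7.4]{KT2}, whose content is exactly your key estimate $|\Trace(q\mid V)|\le D/\sqrt{2}$ for $1\neq q\in Q$ (via the decomposition of $V\otimes V^*$ into $E$-eigenlines) followed by averaging over $Q$ to get $W/D\ge(1-1/\sqrt2)(1-1/|Q|)$, and then the same three specializations $|Q|\ge 2$, $|Q|\ge 8$ when $W$ is even (nonabelian $Q$), and $|Q|>W_0$ in general. Your write-up merely unpacks that citation self-containedly, including the harmless preliminary reduction via \autoref{3-possibilities}; it is not a different argument.
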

\begin{proof}
This is almost identical to the first part of the proof of \cite[Theorem 7.4]{KT2}. The only thing that has to be changed is that in the notation of that proof, we now have $p=r=2$ so that $|Q|\geq 2$, and $\overline{g}$ is now a $2$-element of $\Sp_{2n}(2)$. In this setting, we still get the bound $W/D\geq (1-1/\sqrt{2})(1-1/2) = (2-\sqrt{2})/4 > 0.146$. If $W$ is even, then $Q$ has an irreducible representation of even dimension, so it cannot be abelian. Therefore $|Q|\geq 8$ and we get $W/D\geq (1-1/\sqrt{2})(1-1/8)=7(1-1/\sqrt{2})/8=7(2-\sqrt{2})/16>0.256$. For the last statement, recall that $\Wild|_Q$ is a sum of $W_0$ distinct nontrivial irreducible constituents. Therefore $|Q|\geq |\Irr(Q)|> W_0$.
\end{proof}

Now we determine the local monodromy of $\mathcal{H}$ at $\infty$ and the downstairs characters.

\begin{thm}\label{g_infty}
Suppose that an irreducible hypergeometric sheaf $\mathcal{H}$ has an extraspecial normalizer as its $G_{\geom}$. Then $g_\infty$ is as in one of the cases described in \autoref{m2sp}, with the following additional conditions:
\begin{enumerate}[label={\upshape{(\alph*)}}]
\item It is as in \autoref{m2sp}(a), with $t=1$. $\mathcal{H}$ is Kloosterman in this case.
\item It is as in \autoref{m2sp}(d), $t= 2$, $2\mid na_2$, and the downstairs characters are $\Char(2^{a_2}+1)\setminus\{\mathds{1}\}$.
\item It is as in \autoref{m2sp}(d), $t=1$, and the downstairs character is $\{\mathds{1}\}$.
\item It is as in \autoref{m2sp}(b), $n=2$, and the downstairs characters are $\Char(3)\setminus\{\mathds{1}\}$.
\end{enumerate}

\end{thm}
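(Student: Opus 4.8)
## Proof proposal

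The plan is to combine the classification of $\mathsf{m2sp}$-elements in \autoref{m2sp} with the structural constraints coming from the three propositions above, and systematically eliminate the cases that cannot occur. First I would recall that $g_\infty$ is an $\mathsf{m2sp}$-element of odd order (being prime-to-$p$ with $p=2$), so it falls into one of the four cases (a)--(d) of \autoref{m2sp}; write $a_1,\dots,a_t$ and $r_1,\dots,r_t$ accordingly. The key observation is that the tame part $\Tame$ of the local monodromy at $\infty$ has dimension $M = D - W$, and the downstairs characters are exactly the eigenvalues of $g_\infty$ on the tame part, each with multiplicity one (since $G_{\geom}$ finite forces distinct downstairs characters). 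Moreover the $\mathsf{m2sp}$ condition combined with \autoref{Wild_lower_bound} gives a lower bound on $W$, hence an \emph{upper} bound on $M = \dim\Tame$: roughly $M/D < 1 - (2-\sqrt 2)/4$, and $M/D < 1 - 7(2-\sqrt 2)/16$ when $W$ is even. This severely limits how large the tame part can be relative to $D = 2^n$.

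Next I would feed this into \autoref{3-possibilities}. If $\mathcal{H}$ is Kloosterman (case (a) there), then $M = 0$, so $g_\infty$ has a single eigenvalue orbit structure with no tame eigenvalues; matching against \autoref{m2sp} forces case (a) with $t=1$ (a single $C_{2^n+1}$), giving conclusion (a). If $Q\cap E = 1$ (case (b) there), then $Q$ embeds into $G/\mathbf{Z}(G)E \leq \mathbf{O}_{2n}^\epsilon$; since $g_\infty$ normalizes $Q$ and its image lies in a maximal torus of the product-of-orthogonal-groups shape from \autoref{m2sp}, I would invoke \autoref{Q}(a): in cases (a) and (d) of \autoref{m2sp} with the $a_i$'s satisfying the pairwise-coprimality/irreducibility hypotheses, $Q$ must be trivial — but $Q$ is the image of wild inertia $P(\infty)$ and must be nontrivial whenever $W > M$, which always holds here, and in fact $Q$ is nontrivial outright since $D - M = W > 0$ forces a wild part of positive Swan conductor. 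So case (b) of \autoref{3-possibilities} cannot happen (or forces Kloosterman), and we are always in case (c): $(Q\cap E)\mathbf{Z}(E)$ elementary abelian, each $\Wild|_{Q\cap E}$-constituent nontrivial, and crucially $M \cdot |\{Q\cap Q^e\cap E : e\in E\}| = 2^n$. Since $M \mid 2^n$, write $M = 2^m$.

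Now the endgame: I would split on whether $g_\infty$ is in case (a) or case (d) of \autoref{m2sp} (cases (b),(c) will be handled by noting they reduce to (d)-type spectra after absorbing the $a_t=1$, $r_t=0$ factor, since $C_3 < \mathbf{O}_2^-$ with $r=0$ contributes a multiplicity-$2$ eigenvalue, i.e., a tame direction). In case (a) with $t\geq 2$, \autoref{Q}(a) applies directly (the $s_i^{r_i}$ are irreducible with distinct orders $2^{a_i}+1$ by coprimality, and the orbit-length condition holds since $r_i$ is coprime to $2^{a_i}+1$), forcing $Q\cap(G/\mathbf{Z}(G)E)$-image trivial and hence $Q\leq E$; but then in case (c) of \autoref{3-possibilities} the counting $M\cdot|\{Q\cap Q^e\cap E\}| = 2^n$ combined with $M = 2^m$ and the explicit spectrum gives a contradiction unless $M = 0$, i.e., $t=1$. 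In case (d), the $+$-type factor $s_1^{r_1}$ on $V_1 = \mathbb{C}^{2^{a_1}}$ decomposes as a sum of two irreducibles of degree $a_1$ (the eigenvalue $1$ coming with multiplicity $2$, reflecting $\mu_{2^{a_1}-1}\sqcup\{1\}$), so \autoref{Q}(b) applies with the rôle of $a_t$ played by $a_1$: then $Q\cap E$ has its image in $G/\mathbf{Z}(G)E$ an elementary abelian group of order $\leq 2^{a_1^2}$. Combining $|Q\cap E| \leq 2^{a_1^2}$ (or the refined count) with the counting identity $M\cdot|\{Q\cap Q^e\cap E\}| = 2^n$ and the constraint $M \geq $ (number of tame eigenvalues of $g_\infty$, which from the case-(d) spectrum is $2^{a_1-1}(2^{a_1}-1)/(2^{a_1}-1) \cdot$ [something] — more precisely the multiplicity-$2$ eigenvalues contribute $M = \frac{1}{2}(\text{number of repeated eigenvalues})$, computable from the spectrum formula), together with \autoref{Wild_lower_bound}, pins down $t\leq 2$ and then the exact shapes in (b),(c),(d). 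The divisibility $2\mid na_2$ in conclusion (b) comes from \autoref{2powerplus1} applied to the coprimality of $2^{a_1}-1$ and $2^{a_2}+1$ translated through the parity of $a_1, a_2$ and $n = a_1 + a_2$, plus the requirement that the extraspecial sign $\epsilon$ be consistent. Finally, the downstairs characters are read off as the set of repeated eigenvalues (each taken once): in case (d) with $t=2$ the repeated block is exactly $\mu_{2^{a_2}+1}\setminus\{1\}$ twisted by $\xi$, i.e., $\Char(2^{a_2}+1)\setminus\{\mathds{1}\}$ after the twist is normalized away; in case (d) with $t=1$ it is the single eigenvalue $1$, i.e., $\{\mathds{1}\}$; and in the degenerate (b)-type with $n=2$ it is $\Char(3)\setminus\{\mathds{1}\}$.

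The main obstacle I anticipate is the bookkeeping in the case-(d) endgame: correctly identifying $\dim\Tame$ from the multiset spectrum of $g_\infty$, reconciling it with the counting identity $(\dim\Tame)\cdot|\{Q\cap Q^e\cap E : e\in E\}| = 2^n$, and ruling out all $t\geq 3$ sub-possibilities — here one must carefully track which $a_i$ can equal $1$, which orbit-length hypotheses of \autoref{Q}(b) are actually met, and handle the parity/sign constraints ($\epsilon_1 = (-1)^{t-1}\epsilon$, $a_i$ odd for some $i<t$ in case (c)) to show the only survivors are $t\in\{1,2\}$ with the stated numerical constraints. The delicate point is that \autoref{Q} only controls $Q/(Q\cap E)$'s image, so one must separately bound $|Q\cap E|$ using that each $\Wild|_{Q\cap E}$-constituent is nontrivial (from \autoref{3-possibilities}(c)) together with the representation theory of the elementary abelian group $(Q\cap E)\mathbf{Z}(E)$ inside $E$ — essentially that $Q\cap E$ is a totally isotropic-type subspace of $E/\mathbf{Z}(E) \cong \mathbb{F}_2^{2n}$ on which $\Wild$ restricts without trivial constituents, forcing $|Q\cap E| \leq 2^n$ with equality governing the Kloosterman-adjacent boundary cases.
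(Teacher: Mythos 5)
Your overall skeleton matches the paper's (classify $g_\infty$ via \autoref{m2sp}, then constrain with \autoref{3-possibilities}, \autoref{Q} and \autoref{Wild_lower_bound}), but there are several genuine gaps. The most serious is your elimination of \autoref{3-possibilities}(b). Proposition \autoref{Q} only bounds the image of $Q$ in $G/\mathbf{Z}(G)E$, i.e.\ the quotient $Q/(Q\cap E)$, and in case (d) of \autoref{m2sp} the relevant part is \autoref{Q}(b) (the $+$-type factor is \emph{not} irreducible), which allows that image to be a nontrivial elementary abelian group. So $Q\cap E=1$ with $Q\neq 1$ is entirely possible, and this branch genuinely occurs: it is one of the two routes to conclusion (c) of the theorem. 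Handling it requires showing $\dim\Wild$ is odd and divides $2^n-1$, invoking \autoref{Wild_lower_bound} to get $(2^n-1)/\dim\Wild\in\{1,3\}$, and then killing the $/3$ subcase by a character-value computation (every involution of $Q$ has trace $2^{n-1}$, forcing $n=2$ and $G_{\geom}=\mathsf{S}_5$). None of this is in your outline. A second gap of the same flavor is the non-Kloosterman branch of \autoref{m2sp}(a): the counting identity of \autoref{3-possibilities}(c) alone allows $\dim\Tame=1$, so "contradiction unless $M=0$" does not follow from counting. One needs that $Q\leq E$ is abelian, hence $\Wild|_Q$ splits into one-dimensional pieces and $\dim\Wild$ equals its odd part, so $\dim\Tame$ is odd and divides $2^n$, hence equals $1$; the contradiction then comes from the spectrum of $g_\infty$ on the $(2^n-1)$-dimensional $\Wild$ being a coset of $\mu_{2^n-1}$, which is incompatible with the \autoref{m2sp}(a) spectrum.

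Two further points. Your claimed reduction of \autoref{m2sp}(c) to a (d)-type spectrum is false: in case (c) one has $\gcd(r_t,3)=1$, not $r_t=0$, so the $a_t=1$ factor does not act trivially; that case must be excluded separately by counting the multiplicity-$2$ eigenvalues (there are at least $3$ of them when $a_{i_{odd}}>1$) against the forced value $\dim\Tame=1$. Finally, the case (d) endgame is asserted rather than proved: the bound $t\leq 2$ and the identification $\dim\Tame=2^{a_2}$ do not follow from $|Q/(Q\cap E)|\leq 2^{a_1^2}$ plus counting; the paper obtains them by matching the explicit spectrum $\eta\,\mu_{2^{n-m}-1}(\mu_{2^m+1}\setminus\{1\})$ of $g_\infty$ on $\Wild$ against the \autoref{m2sp}(d) spectrum, and excludes $t=1$ with $\dim\Tame=2^m>1$ by showing $|Q\cap E|=2^m$ would give a $g$-invariant subspace of $E/\mathbf{Z}(E)$ of dimension $m=n/2$, whereas the only $g$-invariant subspaces have dimension $0$, $n$ or $2n$. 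Your "number of repeated eigenvalues" formula is also garbled; the correct count in case (d) with $t\geq 2$ is $\prod_{i\geq 2}2^{a_i}=2^{n-a_1}$, which is what forces the tame part to be large. As written, the proposal identifies the right tools but does not contain the arguments that actually close any of the four branches.
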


\begin{proof}
(1) Suppose that $g$ is as in \autoref{m2sp}(a). By \autoref{Q}(a) we know that $Q/(Q\cap E)=1$, so $Q\leq E
$. First suppose that $\mathcal{H}$ is Kloosterman. Then $\dim \Wild=D=2^n$, so the spectrum of $g$ on $\Wild$ is $\eta(\mu_{2^n+1}\setminus \{1\})$ for some root of unity $\eta$ of odd order. Since there is no $\Tame$ in this case, this is the entire spectrum of $g$, so $t=1$.

Suppose now that $\mathcal{H}$ is not Kloosterman. Since $Q$ is nontrivial, we are in the situation of \autoref{3-possibilities}(c) with $Q=Q\cap E$. In particular, $Q$ is abelian. Therefore, $\dim \Tame=D-\dim\Wild$ is odd, and it divides $D=2^n$, so $\dim \Tame=1$ and $\dim \Wild=2^n-1$. The spectrum of $g$ on $\Wild$ is $\eta\mu_{2^n-1}$ for some root of unity $\eta$ of odd order. This does not happen for $g$ as in \autoref{m2sp}(a).

(2) Now consider the case (b) of \autoref{m2sp}. Here, the spectrum of $g$ on $V$ is $$\left\lbrace \xi\prod_{i=1}^{t-1}\zeta_i\mid \zeta_i\in \mu_{2^{a_i}+1}\setminus \{1\}\right\rbrace$$ with every eigenvalue having multiplicity $2$. Therefore, if an irreducible hypergeometric sheaf has such $g$ as its $g_\infty\in G_{\geom}$, then the spectrum of $g_\infty$ on $\Wild$ must be the above set, with every eigenvalue having multiplicity $1$. Hence $\dim \Wild = D/2=2^{n-1}$. On the other hand, since $Q\cap E$ is an abelian normal subgroup of $Q$ and $Q/(Q\cap E)$ is either trivial or has order $2$ by \autoref{Q}(b), the irreducible characters of $Q$ must have degree $1$ or $2$. Therefore, the $2$-part of $\dim\Wild$ is $1$ or $2$. This forces $n=1$ or $2$. The downstairs characters must be $\{\mathds{1}\}$ if $n=1$ and $\Char(3)\setminus\{\mathds{1}\}$ if $n=2$. However, if $n=1$, then the only hypergeometric sheaf which can have such $g_\infty$ together with $g_0$ as in \autoref{m2sp}(a) is $\mathcal{H}yp_\psi(\Char(3)\setminus\{\mathds{1}\}; \mathds{1})$. The $G_{\geom}$ of this sheaf has an element of order $3$ (namely $g_0$) and a nontrivial $2$-subgroup. By the proof of \cite[Theorem 9.3(i)]{KT2}, we get $G_{\geom}=S_3$. Therefore $n\neq 1$.

(3) Next, consider the case \autoref{m2sp}(c). Let $i_{odd}<t$ be the unique index such that $a_{i_{odd}}$ is odd. Then the spectrum is given by 
\begin{align*}
&\{\xi\omega^j\prod_{i=1}^{t-1}\zeta_i^{r_i} \mid \zeta_i\in \mu_{2^{a_i}+1}\setminus \{1\}, j\in\{1,2\}\} \\=& 2\{\xi\prod_{i=1}^{t-1}\zeta_i \mid \zeta_i\in \mu_{2^{a_i}+1}\setminus\{1\}\text{ for }i\neq i_{odd}, \zeta_{i_{odd}}\in \mu_{2^{a_{i_{odd}}}+1}\setminus\{\omega,\omega^2 \}\} \\&\cup \{\xi\prod_{i=1}^{t-1}\zeta_i \mid \zeta_i\in \mu_{2^{a_i}+1}\setminus\{1\}\text{ for }i\neq i_{odd}, \zeta_{i_{odd}}\in \{\omega,\omega^2 \}\}.
\end{align*} where $\xi$ is a root of unity of odd order and $\omega$ is a primitive $3$rd root of unity. Those with multiplicity $2$ must appear exactly once in both $\Tame$ and $\Wild$. 

Suppose that $a_{i_{odd}}=1$. By the proof of \autoref{Q}(b) with $B_1=A_{2}=B_3$ and $B_2=0$, we see that $Q/(Q\cap E)=1$. The spectrum of $g$ is \begin{align*}
&2\left\lbrace\xi\prod_{i=1}^{t-1}\zeta_i \mid \zeta_i\in \mu_{2^{a_i}+1}\setminus\{1\}\text{ for }i\neq i_{odd}, \zeta_{i_{odd}}=1\right\rbrace \\&\cup \left\lbrace\xi\prod_{i=1}^{t-1}\zeta_i \mid \zeta_i\in \mu_{2^{a_i}+1}\setminus\{1\}\text{ for }i\neq i_{odd}, \zeta_{i_{odd}}\in \{\omega,\omega^2 \}\right\rbrace.
\end{align*}
Since $Q\leq E$ and $Q$ is nontrivial, we are in the situation of \autoref{3-possibilities}(c) with abelian $Q$. Hence, $\dim \Tame$ is an odd number dividing $D$. Therefore $\dim\Tame=1$. Since $\dim \Tame$ is at least the number of eigenvalues of $g$ with multiplicity $2$, we must have $t=2$ and $n=a_{i_{odd}}+a_2=2$. However, in this case, $E$ is of type $+$, so $g_0$ must be as in \autoref{m2sp}(a) with even $t$, which is impossible.

Now suppose that $a_{i_{odd}}> 1$. Then from \autoref{Q}(a), we can see that $Q/(Q\cap E)$ is trivial, so that $1\neq Q\leq E$. Also, since $\mathcal{H}$ has nonzero tame part, it is not Kloosterman. Therefore, we are again in the situation of \autoref{3-possibilities}(c) with $Q=Q\cap E$. Therefore $\dim\Tame=1$, but this is impossible since $\dim\Tame$ must be at least the number of eigenvalues of $g$ with multiplicity $2$, and there are at least $|\mu_{2^{a_{i_{odd}}}+1}\setminus \{\omega,\omega^2\}|\geq 3$ of them.

(4) For \autoref{m2sp}(d), the eigenvalues of multiplicity $2$ are $$\{\xi\zeta_2\zeta_3\cdots \zeta_{t} \mid \zeta_i\in\mu_{2^{a_i}+1}\setminus\{1\}\}\text{ if }t\geq 2\text{, and }\{\xi\}\text{ if }t=1.$$ Therefore, the spectra on $\Tame$ and $\Wild$ both contain them, so $\mathcal{H}$ is not Kloosterman. Also, \autoref{Q}(b) shows that $Q/(Q\cap E)$ is elementary abelian of order not exceeding $2^{a_1^2}$. 

If we are in the situation of \autoref{3-possibilities}(b), then $Q$ is elementary abelian and $\dim \Wild$ is odd. Hence, the eigenvalues of $g$ on $\Wild$ are the $\dim\Wild$th roots of $(\xi\zeta_2\zeta_3\cdots \zeta_t)^{\dim \Wild}$ for $\zeta_i\in \mu_{2^{a_i}+1}$ if $t\geq 2$. Hence $\prod_{i=2}^{t}(2^{a_i}+1)$ divides $\dim \Wild$, but not all $\prod_{i=2}^{t}(2^{a_i}+1)$th roots of $(\xi\zeta_2\zeta_3\cdots \zeta_t)^{\dim \Wild}$ are in the  spectrum of $g$. Therefore $t=1$, $a_1=n$ and $\dim \Wild$ divides $2^{n}-1$. By \autoref{Wild_lower_bound}, $(2^{n}-1)/\dim\Wild\in \{1,3\}$.

Suppose that $\dim\Wild=(2^n-1)/3$. Let $\chi$ be the character of $\mathcal{H}$ as a representation of $G$, and $\chi_{\Tame}=(\dim\Tame) 1_P$ and $\chi_{\Wild}$ be that of $\Tame$ and $\Wild$ as a representation of $Q$, so that $\chi|_Q=\chi_{\Tame}+\chi_{\Wild}$. Also let $x\in Q$ be any nonidentity element. Then $x$ has order $2$, so the eigenvalues of the action of $x$ on $\mathcal{H}$ are $1$ or $-1$. Therefore, $$\mathbb{Z}\ni\chi(x) = \chi_{\Tame}(x)+\chi_{\Wild}(x) \geq \dim\Tame-\dim\Wild=1+(2^n-1)/3>2^{n-2}.$$ On the other hand, by \cite[Lemma 2.4]{GT}, $|\chi(x)|$ is a power of $\sqrt{2}$, and since $\chi$ is faithful, $\chi(x)<2^n$. Therefore $\chi(x)=2^{n-1}$ for all $x\in Q\setminus \{1\}$. But then $$2^n - \frac{2^n-1}{3}=\dim\Tame= (\chi|_Q, 1_Q) = \frac{1}{|Q|}\sum_{x\in Q}\chi(x) = \frac{2^n + (|Q|-1)2^{n-1}}{|Q|} = 2^{n-1} + \frac{2^{n-1}}{|Q|}$$ so that $$|Q|  = \frac{2^{n-1}}{2^{n-1} - \frac{2^n-1}{3}} = \frac{2^{n-1}}{\frac{ 2^{n-1}+1}{3}} .$$ This forces $(2^{n-1}+1)/3 = 1$, so $n=2, t=2$, and $a_1=a_2=1$. Hence $E$ must be of type $-$, and by \autoref{m2sp}(a), $g_0$ must have order $5$. Therefore, the hypergeometric sheaf must be $\mathcal{H}yp_\psi(\Char(5)\setminus\{\mathds{1}\}; \Char(3))$ up to tensoring by a Kummer sheaf. However, this sheaf has $G_{\geom}=\mathsf{S_5}$ as studied in the proof of \cite[Theorem 9.3]{KT2}.

Suppose now that \autoref{3-possibilities}(c) holds, so that $\dim \Tame$ divides $D=2^n$. If $\dim\Tame=1$, then since there must be at most one eigenvalue of $g$ of multiplicity $2$, we see that $t=1$. The spectrum of $g$ on $\Wild$ is $\xi\mu_{2^{n}-1}$.

Now assume $\dim\Tame>1$. Then $\dim\Tame = 2^m$ for some integer $0<m<n$, and $\dim \Wild = 2^m(2^{n-m}-1)$. The spectrum of $g$ on $\Wild$ is then $\eta\mu_{2^{n-m}-1}(\mu_{2^m+1}\setminus \{1\})$ for some root of unity $\eta$ of odd order. Therefore, the $(2^{n-m}-1)(2^m+1)$th powers of $\zeta_2\zeta_3\cdots \zeta_t$ are same for all $\zeta_i\in \mu_{2^{a_i}+1}\setminus \{1\}$, so $\prod_{i=2}^{t}(2^{a_i}+1)$ divides $(2^{n-m}-1)(2^m+1)$. Also, multiplication by a primitive $(2^{n-m}-1)$th root does not change the spectrum of $g$ on $\Wild$. From the description of the spectrum of $g$, one immediately sees that if a root of unity which is not $1$ has this property, then its order divides $2^{a_1}-1$. Therefore $2^{n-m}-1$ divides $2^{a_1}-1$, and $\prod_{i=2}^{t}(2^{a_i}+1)$ divides $2^m+1$. On the other hand, if we take an eigenvalue of $g$ on $\Wild$ and multiply by $(2^m+1)$th roots of unity, then all but one of the $2^m+1$ numbers are also contained in the spectrum. But if $t\geq 3$, then strictly more than one multiple of an eigenvalue by $(2^{a_2}+1)(2^{a_3}+1)$th roots of unity are missing from the spectrum of $g$. Therefore we must have $t\leq 2$. 

If $t=2$, then we must have $2^{a_2}+1=2^m+1$, since among the multiples of an eigenvalue by the $(2^m+1)$th roots of unity, only one must be missing, but $2^{a_1}+1$ divides $2^m+1$ and one from the multiples by $(2^{a_1}+1)$th roots of unity is already missing. Hence $m=a_2$, $a_1=n-m$, $\dim\Tame=2^{a_2}$, and $\dim \Wild=(2^{a_1}-1)2^{a_2}$. Since $2^{n-m}-1$ and $2^{m}+1$ are relatively prime, we also see that $2\mid nm$.

If $t=1$, then the spectrum of $g$ on $\Wild$ is $$\{\eta\zeta_1\zeta_2\mid \zeta_1\in \mu_{2^{n-m}-1}, \zeta_2\in \mu_{2^m+1}\setminus\{1\}\}$$ for some root of unity $\eta$ of odd order, and this must be contained in $\xi\mu_{2^n-1}$. Therefore $(2^m+1)(2^{n-m}-1)=2^n+2^{n-m}-2^m-1$ divides $2^n-1$, which forces $n=2m$. Note that $(\chi|_{Q},1_{Q})=\dim \Tame= 2^m$. Also recall that $Q\cap E$ is normalized by $Q$ and $g_\infty$, $\chi$ is faithful, and the nontrivial irreducible constituents of $\chi|_Q$ are permuted cyclically by $g_\infty$. Hence, no nontrivial irreducible constituent of $\chi|_Q$ has $1_{Q\cap E}$ as a constituent when restricted to $Q\cap E$. Therefore $(\chi|_{Q\cap E},1_{Q\cap E})=2^m$. On the other hand, since $\chi|_E$ is the unique irreducible character of $E$ of degree $2^n$, $\chi|_{Q\cap E}$ is a multiple of the regular character $\rho_{Q\cap E}$ of $Q\cap E$. Since $(\rho_{Q\cap E},1_{Q\cap E})=1$, we get $\chi|_{Q\cap E} = 2^m\rho_{Q\cap E}$. In particular, $|Q\cap E|=\rho_{Q\cap E}(1) = \chi|_{Q\cap E}(1)/2^m = 2^m$. Hence, the $g$-invariant subspace $(Q\cap E)\mathbf{Z}(E)/\mathbf{Z}(E)$ of $E/\mathbf{Z}(E)$ has dimension $m$, but the $g$-invariant subspaces of $E/\mathbf{Z}(E)$ have dimension $0, n$ or $2n$. Therefore this case is impossible.
\end{proof}

\section{Determination of the finiteness of the monodromy}

From \autoref{m2sp}(a), we know the possible upstairs characters of $\mathcal{H}$, and the possible downstairs characters were determined in \autoref{g_infty}. However, not every pair of these upstairs and downstairs characters have the desired $G_{\geom}$; in fact, $G_{\geom}$ is not even finite for some of them. In this section, we determine the pairs with finite $G_{\geom}$.

By \cite[8.14.6]{K-ESDE}, the finiteness of $G_{\geom}$ is equivalent to saying that the values of trace function after a Tate twist are algebraic integers, and it is enough to check the $p$-adic valuations. This can be done by using the so-called \emph{$V$-test} of Kubert \cite[Proposition 13.2]{K-G2}\cite[Section 8.16]{K-ESDE}. 

The definition of the function $V$ can be found in \cite[Discussion after Proposition 13.1]{K-G2}; for an alternative definition, for a given prime $p$ and integers $k>0, 0\leq a<p^k-1$ and $z$, we have $$V\left(\frac{a}{p^k-1}+z\right) = \frac{\text{sum of digits of }a\text{ in base }p}{k(p-1)},$$ cf. \cite[Section 8.16]{K-ESDE}\cite[Theorem 13.4]{K-G2}. For the rest of this paper, we will use the following basic properties of the function $V$, often without mentioning. See \cite[Discussion before Proposition 13.2]{K-G2} for the proofs.
\begin{enumerate}[label=(\arabic*)]
\item $V(x)=0$ if and only if $x\in \mathbb{Z}$. 
\item $V(x)+V(-x)=1$ for any $x\notin \mathbb{Z}$.
\item $V(1/2)=V(1/2)$.
\item $V(x)=V(px)$ for any $x$.
\item $V(x)+V(y)\geq V(x+y)$ for any $x,y$. 
\item (Hasse-Davenport relation) $\sum_{i=1}^{r}V(x+i/r) = V(rx)+\frac{r-1}{2}$ for any $x$ and $p\nmid r\in \mathbb{Z}$.
\end{enumerate}

In practice, instead of the original inequality stated in \cite[Proposition 13.2]{K-G2}, the following simplified version is used. Although this version is already known to experts, it seems to be not written explicitly in literature to my knowledge, so here I include the statement and proof for the convenience of readers.
\begin{lem}\label{V-test}
Let $\chi$ be a multiplicative character of order $p^d-1$, and let $\mathcal{G}$ be the hypergeometric sheaf $\mathcal{H}yp_\psi(\chi^{a_1},\dots,\chi^{a_D}; \chi^{b_1},\dots, \chi^{b_M})$ where $a_1,\dots, a_D, b_1,\dots, b_M\in \{0,\dots, p^d-2\}$ are pairwise distinct. Then the geometric monodromy group $G_{\geom}$ of $\mathcal{G}$ is finite if and only if for every $x\in \mathbb{Q}/\mathbb{Z}$ whose denominator is not divisible by $p$ and for every $N\in \{1,\dots, p^d-2\}$ which is relatively prime to $p^d-1$, we have the inequality
\begin{align*}
\sum_{i=1}^{D}V(Na_i+x) + \sum_{j=1}^{M}V(-Nb_j-x) \geq \frac{D+M-1}{2}.
\end{align*}
\end{lem}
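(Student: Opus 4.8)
The plan is to derive \autoref{V-test} from the original $V$-test of Kubert as stated in \cite[Proposition 13.2]{K-G2}, by carrying out two reductions: first, identifying the parameter set of a hypergeometric sheaf with characters that are all powers of a single $\chi$ of order $p^d-1$ with the combinatorial data the $V$-test takes as input; and second, showing that the quantifier ``for all $x$ with denominator prime to $p$ and all $N$ coprime to $p^d-1$'' captures exactly the content of Kubert's inequality after one uses the Hasse--Davenport relation and the invariance $V(x)=V(px)$ to collapse the Galois orbit. I would begin by recalling the precise shape of \cite[Proposition 13.2]{K-G2}: the trace function of $\mathcal{G}$, Tate-twisted to weight zero, has all values algebraic integers (equivalently $G_{\geom}$ finite, by \cite[8.14.6]{K-ESDE}) if and only if a certain inequality in $V$ holds for all relevant fractions. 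The key translation is that a multiplicative character $\chi^{a}$ of $\mathbb{F}_{p^d}^\times$, with $\chi$ of order $p^d-1$, corresponds to the element $\tfrac{a}{p^d-1}\in\mathbb{Q}/\mathbb{Z}$, and that the ``$x$'' appearing in the statement ranges over the possible shifts coming from passing to larger fields (denominator prime to $p$), while ``$N$'' records the action of $\Gal$ on $\psi$ and the characters, i.e. the Frobenius twists that are constrained to be coprime to $p^d-1$.

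Next I would set up the bookkeeping carefully. Write $\alpha_i = a_i/(p^d-1)$ and $\beta_j = b_j/(p^d-1)$. The hypothesis that the $a_i$ and $b_j$ are pairwise distinct in $\{0,\dots,p^d-2\}$ is exactly the condition ensuring $\mathcal{G}$ is a genuine hypergeometric sheaf with distinct upstairs and distinct downstairs characters, which is needed for the earlier-cited structural results to apply. Kubert's criterion, in the hypergeometric setting worked out in \cite[Section 8.16]{K-ESDE}, says $G_{\geom}$ is finite iff for every $t$ in the relevant index set,
\begin{align*}
\sum_{i=1}^{D} V(\alpha_i + \text{(shift)}) + \sum_{j=1}^{M} V(-\beta_j - \text{(shift)}) \geq \frac{D+M-1}{2},
\end{align*}
where the shift and an overall multiplier range over the data I described. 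The reduction to the clean statement is then: (i) use property (4), $V(x)=V(px)$, to see that multiplying all parameters by $p$ does not change any term, so the multiplier $N$ only matters modulo the action of $p$, i.e. we may take $N$ to be any integer coprime to $p^d-1$; (ii) use the Hasse--Davenport relation, property (6), to handle the base-change shifts, showing that it suffices to let $x$ range over $\mathbb{Q}/\mathbb{Z}$ with denominator prime to $p$ rather than only over multiples of $1/(p^d-1)$ — the extra denominators are precisely what appear when one passes to $\mathbb{F}_{p^{dm}}$ and the $V$-values over the extension decompose via Hasse--Davenport into averages of $V$-values with finer denominators; (iii) absorb the constant $(r-1)/2$ terms from Hasse--Davenport consistently on both sides so that the bound $\tfrac{D+M-1}{2}$ is unchanged.

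The main obstacle I anticipate is step (ii): making precise that the set of fractions $x$ one must test, after incorporating all finite base changes $\mathbb{F}_{p^d}\subseteq\mathbb{F}_{p^{dm}}$, is exactly $\{x\in\mathbb{Q}/\mathbb{Z} : p\nmid \operatorname{denom}(x)\}$, and that no information is lost or spuriously added in the passage — i.e. that the inequality over the extension field is \emph{equivalent} to, not merely implied by, the collection of inequalities in the simplified form. This requires the Hasse--Davenport relation to be applied in both directions and a small argument that the denominators prime to $p$ are cofinal among the $p^{dm}-1$ as $m$ varies, together with the observation that for fixed denominator, scaling the numerator by an integer coprime to that denominator is subsumed by letting $N$ vary. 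Everything else — the dictionary between characters and fractions, the role of the distinctness hypothesis, and the arithmetic with properties (1)--(6) — is routine once this equivalence is nailed down. I would close by remarking that in applications one only ever needs finitely many $x$ (those with denominator dividing $p^d-1$ suffice after the reductions, but the stated form with arbitrary $p$-prime denominators is what matches the literature and is most convenient to quote).
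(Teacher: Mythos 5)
Your proposal misidentifies what actually has to be proved here, and as a result the essential step is missing. The content of \autoref{V-test} is \emph{not} the dictionary between characters $\chi^{a_i}$ and fractions $a_i/(p^d-1)$, nor the ranges of $x$ and $N$ --- all of that is already present, in exactly this form, in \cite[Proposition 13.2]{K-G2}, so there is no reduction of the kind you describe in your steps (i) and (ii) to carry out. The one and only difference between the lemma and Kubert's criterion is that the data-dependent term $\frac{1}{D}\sum_{i,j}V(Na_i-Nb_j)$ appearing on the right-hand side of \cite[Proposition 13.2]{K-G2} has been replaced by the constant $\frac{M}{2}$ (so that the bound becomes $\frac{D-1}{2}+\frac{M}{2}=\frac{D+M-1}{2}$). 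Your step (iii), ``absorb the constant $(r-1)/2$ terms from Hasse--Davenport so that the bound is unchanged,'' does not engage with this: there is no Hasse--Davenport manipulation that turns $\frac{1}{D}\sum_{i,j}V(Na_i-Nb_j)$ into $\frac{M}{2}$ for free, and a priori these two quantities could differ, in which case the two criteria would not be equivalent.

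The actual argument needed is an equality-forcing trick. Assuming the simplified inequality holds for all admissible $(N,x)$, one specializes to the pairs $(N,x)=(N_0,-N_0b_j)$ and $(-N_0,N_0b_j)$ and adds the two inequalities; the identity $V(r)+V(-r)=1$ for $r\notin\mathbb{Z}$ (and $=0$ for $r\in\mathbb{Z}$, using that the $a_i$ and $b_j$ are pairwise distinct) shows the sum collapses to $D+M-1\geq D+M-1$, so both inequalities are in fact equalities. Summing these equalities over $j=1,\dots,M$ then yields $\frac{1}{D}\sum_{i,j}V(N_0a_i-N_0b_j)=\frac{M}{2}$, which shows the simplified criterion implies Kubert's; the converse direction is the same computation run with Kubert's right-hand side. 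Without this step your proof does not establish the equivalence, only (at best) one implication in the degenerate case where the two right-hand sides happen to agree. I would also note that your worry about denominators prime to $p$ being ``cofinal'' among the $p^{dm}-1$ is a non-issue for this lemma, since the quantification over $x$ is identical in both statements.
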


\begin{proof}
The only difference between this and \cite[Proposition 13.2]{K-G2} is that the term $(1/D)\sum_{i,j}V(Na_i-Nb_j)$ is replaced by $M/2$. Suppose that the inequality in this lemma holds for all $x$ and $N$. Let $N_0$ be any possible value of $N$, and choose $(N,x) = (N_0, -N_0b_j)$ and $(-N_0, N_0b_j)$ and take the sum of the resulting inequalities. Since $V(r)+V(-r) = 1$ if $r\notin \mathbb{Z}$ and $0$ otherwise, the sum of these two inequalities becomes $D + M -1 \geq D+M-1$. Therefore, the equality must hold for both pairs $(N,x)=(N_0,-N_0b_j)$ and $(-N_0,N_0b_j)$. Now take the sum of the equalities for the pairs $(N,x) = (N_0,-N_0b_1), (N_0,-N_0b_2),\dots, (N_0,-N_0b_M)$. Then we get $$\sum_{i,j}V(N_0a_i-N_0b_j) + \frac{M(M-1)}{2} = \frac{M(D+M-1)}{2} = \frac{ DM+ M(M-1)}{2}$$ so that $(1/D)\sum_{i,j}V(N_0a_i-N_0b_j) = M/2$. 

Conversely, suppose that the inequality in \cite[Proposition 13.2]{K-G2} holds for all $x$ and $N$. By an entirely analogous argument, we get $$\sum_{i,j}V(Na_i-Nb_j) + \frac{M(M-1)}{2}= \frac{M(D-1)}{2} + \frac{M}{D}\sum_{i,j}V(Na_i-Nb_j)$$ so that $(1/D)\sum_{i,j}V(Na_i-Nb_j) = (1/(D-M))(\frac{M(D-1)}{2}-\frac{M(M-1)}{2}) = M/2$.
\end{proof}

From now on, given a hypergeometric sheaf $\mathcal{G}$ as in \autoref{V-test}, we will call $$V_{\mathcal{G}, \text{up}}(N,x):=\sum_{i=1}^{D}V(Na_i+x) - D/2$$ the ``upstairs part'' of the $V$-test, and $$V_{\mathcal{G}, \text{down}}(N,x):= \sum_{j=1}^{M}V(-Nb_j-x) - M/2$$ the ``downstairs part'' of the test. Thus the inequality for the $V$-test becomes $$V_{\mathcal{G}, \text{up}}(N,x)+V_{\mathcal{G}, \text{down}}(N,x) +\frac{1}{2} \geq 0.$$

Now we return to the notations in the previous section, so $\mathcal{H}$ is an irreducible hypergeometric sheaf of rank $D=2^n$ in characteristic $2$ whose $G_{\geom}$ is an extraspecial normalizer. The element $g_0$ is as described in \autoref{m2sp}(a); let $a_1>\cdots> a_t$ be the positive integers in this description, so that $a_1+\cdots + a_t = n$, and the numbers $A_i:=2^{a_i}+1$ are pairwise coprime. The upstairs characters are the multiplicative characters of order dividing $A_1A_2\cdots A_t$ but not dividing $A_1A_2\cdots A_t/A_i$ for any $i$, tensored with a common multiplicative character. Therefore, the $V$-test of this $\mathcal{H}$ has the upstairs part
\begin{align*}
&V_{\mathcal{H},\text{up}}(N,x)=\sum_{i_1 = 1}^{2^{a_1}}\sum_{i_2=1}^{2^{a_2}}\cdots \sum_{i_t = 1}^{2^{a_t}}V\left(N\left(\frac{i_1}{A_1}+\cdots + \frac{i_t}{A_t}\right) + x\right)-\frac{D}{2}\\=& \sum_{i_1,\dots,i_{t-1}}\left(V\left(A_tN\left(\frac{i_1}{A_1}+\cdots + \frac{i_{t-1}}{A_{t-1}}\right)+A_tx\right) + 2^{a_t-1} - V\left(N\left(\frac{i_1}{A_1}+\cdots + \frac{i_{t-1}}{A_{t-1}}\right)+x\right) \right)-\frac{D}{2} \\=& \cdots \\=& \sum_{i=0}^{t} \left((-1)^{t-i}\sum_{1\leq t_1<\cdots < t_i\leq t}V(A_{t_1}A_{t_2}\cdots A_{t_i}x)\right)
\end{align*}
where the equalities follow from the Hasse-Davenport relation stated above. 

Let us write $$V(2; r_1, \dots, r_t; x) := \sum_{i=0}^{t} \left((-1)^{t-i}\sum_{1\leq t_1<\cdots < t_i\leq t}V((2^{r_{t_1}}+1)\cdots (2^{r_{t_i}}+1)x)\right)$$ so that $V_{\mathcal{H},\text{up}}(N,x) = V(2;a_1,\dots,a_t;x)$.
We now prove several inequalities involving this function.

\begin{lem}\label{V_lem}
Let $r\in \mathbb{Z}_{>0}$. In characteristic $2$, the following hold.
\begin{enumerate}[label={\upshape{(\alph*)}}]
\item $V((2^r+1)x) \leq 2V(x) \leq V((2^r+1)x)+1$. 
\item If $V((2^r+1)x) = 0$, then $V(x)=0$ or $1/2$. 
\end{enumerate}
\end{lem}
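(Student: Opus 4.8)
\textbf{Proof proposal for Lemma~\ref{V_lem}.}

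The plan is to use the explicit ``sum of base-$2$ digits'' description of $V$ together with the multiplication-by-$2$ invariance $V(x)=V(2x)$. First I would reduce to the case where $x$ has denominator prime to $2$ (if the denominator of $x$ is divisible by $2$, clear it by replacing $x$ with $2^k x$ for suitable $k$, which changes neither side since $V$ is invariant under multiplication by $2$ and $2^r+1$ is odd). So write $x = a/(2^m-1)$ for some $m$ and $0 \le a < 2^m - 1$; replacing $m$ by a multiple we may also assume $r \mid m$, so that $2^r + 1 \mid 2^m - 1$ precisely when $m/r$ is even — but in general it is cleaner to pick $m$ with $r \mid m$ and $m/r$ even, forcing $2^r+1 \mid 2^m-1$. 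Then $V(x) = s_2(a)/m$ where $s_2$ is the base-$2$ digit sum, and $(2^r+1)x$ has denominator dividing $2^m-1$ as well, say $(2^r+1)x \equiv b/(2^m-1)$ with $0 \le b < 2^m-1$, so $V((2^r+1)x) = s_2(b)/m$.

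For part (a): the key observation is that $(2^r+1)a \equiv 2^r a + a \pmod{2^m - 1}$, and reduction mod $2^m-1$ corresponds to cyclically rotating the length-$m$ binary string of $a$ by $r$ positions (this is the standard ``casting out $2^m-1$'' fact), which preserves the digit sum; call the rotated value $a'$, so $s_2(a') = s_2(a)$ and $a' + a \equiv b \pmod{2^m-1}$. Now I would invoke the subadditivity of digit sums under addition: $s_2(u+v) \le s_2(u) + s_2(v)$ always, and when a reduction mod $2^m-1$ is performed the digit sum can only stay the same or decrease (each ``carry out of the top'' wraps a $1$ around and can merge with existing bits). This gives $s_2(b) \le s_2(a') + s_2(a) = 2 s_2(a)$, i.e. $V((2^r+1)x) \le 2V(x)$. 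For the upper bound $2V(x) \le V((2^r+1)x) + 1$, I would run the same argument in reverse: from $b = (2^r+1)a$ recover $a$ via multiplication by the inverse of $2^r+1$ modulo $2^m-1$; more directly, the deficit $2s_2(a) - s_2(b)$ counts (twice) the number of carry/wrap positions in forming $a' + a \bmod{2^m-1}$, and I would bound this count by $m/2$ — no two carries can occur at adjacent bit positions in a single binary addition of two $m$-bit numbers in the cyclic setting — which after dividing by $m$ yields $2V(x) - V((2^r+1)x) \le 1$. Alternatively, and more slickly, part (a) of this very lemma's upper bound is equivalent by properties (1),(2) to the lower bound applied to $-x$: indeed $V((2^r+1)(-x)) \le 2V(-x)$ rearranges, using $V(y) + V(-y) = 1$ for $y \notin \mathbb{Z}$, into $2V(x) \le V((2^r+1)x) + 1$ whenever $x \notin \frac{1}{2}\mathbb{Z}$; the edge cases $x \in \mathbb{Z}$ and $x \in \frac12 + \mathbb{Z}$ are checked by hand. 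I expect this duality trick to be the cleanest route, so the real content is just the single inequality $V((2^r+1)x) \le 2V(x)$, proved via the digit-sum/cyclic-rotation argument above.

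For part (b): if $V((2^r+1)x) = 0$ then by property (1) we have $(2^r+1)x \in \mathbb{Z}$, so $2x \equiv -x \pmod{\mathbb{Z}}$ after multiplying the relation $(2^r+1)x \in \mathbb Z$ — wait, more carefully: $(2^r+1)x \in \mathbb{Z}$ means $2^r x \equiv -x$, hence iterating, $2^{2r} x \equiv x \pmod{\mathbb Z}$; but also $V(x) = V(2^r x) = V(-x) = 1 - V(x)$ if $x \notin \mathbb Z$, forcing $V(x) = 1/2$, and if $x \in \mathbb Z$ then $V(x) = 0$. This is short and uses only properties (1),(2),(4).

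The main obstacle is the lower-bound/upper-bound pair in part (a): getting the constant exactly right. I anticipate spending the most effort justifying rigorously that a reduction modulo $2^m-1$ never increases the binary digit sum (the ``wrap-around carry'' bookkeeping), and that the two-sided gap is at most $1$ after normalization by $m$; the $V(-x)$ duality above is what makes the upper bound free once the lower bound is in hand, so I would organize the write-up around proving $V((2^r+1)x) \le 2V(x)$ first and deriving everything else from it.
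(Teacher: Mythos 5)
Your proof is correct and, in its final organization, essentially the paper's: the lower bound $V((2^r+1)x)\le 2V(x)$ plus the $V(y)+V(-y)=1$ duality gives the upper bound, and your part (b) is verbatim the paper's argument. Two remarks. First, the lower bound needs none of the digit-sum/cyclic-rotation machinery: it is immediate from the listed properties (4) and (5), namely $V((2^r+1)x)=V(2^rx+x)\le V(2^rx)+V(x)=2V(x)$; your base-$2$ computation is really a re-derivation of property (5) in this special case. Second, the direct carry-counting route you sketch for the upper bound is flawed as stated: in base $2$ the digit-sum deficit equals the number of carries (not twice it), and carries certainly can occur at adjacent positions (e.g.\ $011+011=110$), so the claimed bound of $m/2$ carry positions is false --- fortunately you discard this route in favor of the duality. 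Finally, the genuine edge case for the duality is not $x\in\frac12\mathbb{Z}$ but rather $(2^r+1)x\in\mathbb{Z}$ with $x\notin\mathbb{Z}$ (e.g.\ $x=1/(2^r+1)$), where the step $2-V(-(2^r+1)x)=V((2^r+1)x)+1$ breaks down; there part (b) gives $V(x)\in\{0,1/2\}$, so $2V(x)\le 1=V((2^r+1)x)+1$ holds anyway.
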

\begin{proof}
(a) If $x\in \mathbb{Z}$, then $V((2^r+1)x) = V(x) = 0$. If $x\notin \mathbb{Z}$, then we have $V((2^r+1)x) \leq V(2^rx)+V(x) = 2V(x) = 2-2V(-x) \leq 2 - V(-(2^r+1)x) = V((2^r+1)x) +1$. 

(b) Since $V((2^r+1)x)=0$, we must have $(2^r+1)x\in \mathbb{Z}$. If $x\in \mathbb{Z}$, then $V(x)=0$. So suppose $x\notin \mathbb{Z}$. Then $V(x) = 1-V(-x) = 1-V((2^r+1)x - x) = 1-V(2^rx) = 1-V(x)$, so $V(x)=1/2$.
\end{proof}

\begin{crl}\label{1-piece}
In characteristic $2$, $V(2; r_1; x)= V((2^r+1)x) - V(x)\in [-1/2, 1/2)$ for every positive integer $r$. It is $-1/2$ if and only if $x = i/(2^r+1)$ for some integer $i$ not divisible by $2^r+1$.
\end{crl}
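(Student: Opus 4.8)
The plan is to deduce both the stated range and the equality case directly from \autoref{V_lem}, using only the elementary facts that $V(y)\ge 0$, that $V(y)=0$ exactly when $y\in\mathbb{Z}$, and that $V(y)+V(-y)=1$ for $y\notin\mathbb{Z}$. Specializing the definition of $V(2;r_1,\dots,r_t;x)$ to $t=1$ gives $V(2;r_1;x)=V((2^{r}+1)x)-V(x)$, so it suffices to analyze $f(x):=V((2^{r}+1)x)-V(x)$. Abbreviate $a:=V((2^{r}+1)x)$. First I note $0\le a<1$: nonnegativity is immediate, while $a=1$ would force $V(-(2^{r}+1)x)=1-a=0$, hence $(2^{r}+1)x\in\mathbb{Z}$, hence $a=0$, a contradiction.

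For the upper bound, the inequality $V((2^{r}+1)x)\le 2V(x)$ from \autoref{V_lem}(a) gives $V(x)\ge a/2$, so $f(x)=a-V(x)\le a-a/2=a/2<1/2$. For the lower bound, the inequality $2V(x)\le V((2^{r}+1)x)+1$ from \autoref{V_lem}(a) gives $V(x)\le(a+1)/2$, so $f(x)=a-V(x)\ge a-(a+1)/2=(a-1)/2\ge-1/2$, using $a\ge 0$. This proves $f(x)\in[-1/2,1/2)$.

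For the characterization of $f(x)=-1/2$: equality in the lower bound forces equality in both of the two estimates just used, i.e. $a=0$ and $V(x)=(a+1)/2=1/2$. Now $a=V((2^{r}+1)x)=0$ means precisely $(2^{r}+1)x\in\mathbb{Z}$, i.e. $x=i/(2^{r}+1)$ for some integer $i$, and the condition $V(x)=\tfrac12\neq 0$ just records that $x\notin\mathbb{Z}$, i.e. $2^{r}+1\nmid i$. Conversely, if $x=i/(2^{r}+1)$ with $2^{r}+1\nmid i$, then $(2^{r}+1)x=i\in\mathbb{Z}$, so $a=0$, and \autoref{V_lem}(b) forces $V(x)\in\{0,\tfrac12\}$; since $x\notin\mathbb{Z}$ we get $V(x)=\tfrac12$, whence $f(x)=-\tfrac12$.

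The whole argument is a short manipulation of \autoref{V_lem}, so there is no serious obstacle; the only steps needing a little attention are that the upper bound is \emph{strict} (which relies on $a<1$, equivalently on $V$ never taking the value $1$) and that the equality analysis correctly tracks which of the two inequalities in \autoref{V_lem}(a) is being saturated at a given $x$.
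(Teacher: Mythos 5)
Your proof is correct and uses the same ingredients as the paper's: both parts of \autoref{V_lem} together with the basic identities $V(y)=0\iff y\in\mathbb{Z}$ and $V(y)+V(-y)=1$ for $y\notin\mathbb{Z}$. The only cosmetic difference is that the paper splits into the cases $V((2^r+1)x)=0$ and $0<V((2^r+1)x)<1$, whereas you give a uniform bound and then read off the equality case from which inequalities are saturated; both routes are equally short and valid.
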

\begin{proof}
If $V((2^r+1)x)= 0$ then $V(2;r_1;x) = 0$ or $-1/2$ by \autoref{V_lem}(b). So assume $0<V((2^r+1)x)<1$. Then $$-V(x) < V((2^r+1)x)-V(x) < 1- V(x).$$ On the other hand, from \autoref{V_lem}(a) we get $$V(x)-1\leq V((2^r+1)x)-V(x) \leq V(x).$$ Combining these two inequalities, we get $V(2;r_1;x)\in (-1/2, 1/2)$.
\end{proof}

\begin{crl}
\begin{enumerate}[label={\upshape{(\alph*)}}]
\item If $g_\infty$ is as in \autoref{g_infty}(a), then $V_{\mathcal{H}, \text{down}}(N,x)=0$.
\item If $g_\infty$ is as in \autoref{g_infty}(b), then $V_{\mathcal{H}, \text{down}}(N,x)=V(-(2^{a_2}+1)x)-V(-x) \in [-1/2,1/2)$. Also, $V_{\mathcal{H}, \text{down}}(N,x)=-1/2$ precisely when $x=i/(2^{a_2}+1)$ for some integer $i$ not divisible by $2^{a_2}+1$.
\item If $g_\infty$ is as in \autoref{g_infty}(c), then $V_{\mathcal{H}, \text{down}}(N,x)=V(-x)-1/2\in [-1/2,1/2)$.
\end{enumerate}
\end{crl}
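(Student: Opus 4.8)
The plan is to handle the three cases separately, in each case reading the downstairs characters off \autoref{g_infty} and substituting into the definition of the downstairs part of the $V$-test; cases (a) and (c) then fall out immediately, and only (b) requires a Hasse--Davenport computation.

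In case (a) the sheaf $\mathcal{H}$ is Kloosterman, so $M=0$, the sum defining $V_{\mathcal{H},\text{down}}(N,x)$ is empty, and the value is $0$. In case (c) the unique downstairs character is trivial, so $M=1$ with downstairs exponent $0$; hence $V_{\mathcal{H},\text{down}}(N,x)=V(-x)-\tfrac12$ directly from the definition, and since $V$ vanishes exactly on $\mathbb{Z}$ and satisfies $V(y)+V(-y)=1$ for $y\notin\mathbb{Z}$, we have $V(-x)\in[0,1)$ and so $V_{\mathcal{H},\text{down}}(N,x)\in[-\tfrac12,\tfrac12)$.

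For case (b), write $A_2:=2^{a_2}+1$. The $M=A_2-1$ downstairs characters are the nontrivial characters of order dividing $A_2$, with exponents $j/A_2$ for $j=1,\dots,A_2-1$, so
\[
V_{\mathcal{H},\text{down}}(N,x)=\sum_{j=1}^{A_2-1}V\!\left(-x-\tfrac{Nj}{A_2}\right)-\tfrac{A_2-1}{2}.
\]
Since $\gcd(N,p^d-1)=1$ and $A_2\mid p^d-1$, multiplication by $N$ permutes the nonzero residues modulo $A_2$; composing with the bijection $k\mapsto -k$ on those residues, the sum over $j$ becomes $\sum_{k=1}^{A_2-1}V\!\left(-x+\tfrac{k}{A_2}\right)=\sum_{k=0}^{A_2-1}V\!\left(-x+\tfrac{k}{A_2}\right)-V(-x)$. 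The Hasse--Davenport relation (applicable because $2\nmid A_2$) collapses the complete sum to $V(-A_2x)+\tfrac{A_2-1}{2}$, and cancelling $\tfrac{A_2-1}{2}$ leaves $V_{\mathcal{H},\text{down}}(N,x)=V(-(2^{a_2}+1)x)-V(-x)$. This equals $V(2;a_2;-x)$ in the notation introduced before \autoref{V_lem}, so \autoref{1-piece} applied with argument $-x$ gives the range $[-\tfrac12,\tfrac12)$ and shows the value is $-\tfrac12$ exactly when $-x=i/(2^{a_2}+1)$ for some integer $i$ not divisible by $2^{a_2}+1$, equivalently (replace $i$ by $-i$) when $x=i/(2^{a_2}+1)$ for some such $i$.

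The only genuinely nontrivial step is the bookkeeping in case (b): recognizing that, up to the action of $N$ and up to negation, the downstairs exponents sweep out every nonzero residue modulo $A_2$, so that adjoining the missing $k=0$ term puts the sum into Hasse--Davenport form. Everything else is the definitions, the elementary properties of $V$, and the already-established \autoref{1-piece}.
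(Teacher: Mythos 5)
Your proposal is correct and follows the same route as the paper: the paper likewise treats (a) and (c) as immediate and, for (b), identifies $V_{\mathcal{H},\text{down}}(N,x)$ with $V(2;a_2;-x)$ and invokes \autoref{1-piece}. The only difference is that you spell out the Hasse--Davenport collapsing of the sum over nontrivial characters of order dividing $2^{a_2}+1$ (and the independence from $N$), which the paper leaves implicit since the analogous computation was already carried out for the upstairs part.
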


\begin{proof}
(a) and (c) are obvious. For (b), the downstairs part is $V(-(2^{a_2}+1)x)-V(-x) = V(2;a_2;-x)$, so we can apply \autoref{1-piece}.
\end{proof}

\begin{lem}\label{2-pieces}
In characteristic $2$, $V(2; r_1,r_2;x)= V((2^{r_1}+1)(2^{r_2}+1)x) - V((2^{r_1}+1)x)-V((2^{r_2}+1)x) + V(x)\in [-3/4, 3/4)$ for every positive integers $r_1, r_2$ such that $2^{r_1}+1$ and $2^{r_2}+1$ are coprime. It is $-3/4$ if and only if $(2^{r_1}+1)(2^{r_2}+1)x\in \mathbb{Z}$ and $V(x)=1/4$. 
\end{lem}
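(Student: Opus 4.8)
The plan is to abbreviate $A_i:=2^{r_i}+1$ and to work throughout with the four quantities $u:=V(x)$, $p_i:=V(A_ix)$ for $i=1,2$, and $w:=V(A_1A_2x)$, all lying in $[0,1)$; the number to be estimated is $f:=V(2;r_1,r_2;x)=w-p_1-p_2+u$. Everything should follow from subadditivity of $V$, from $V(2^my)=V(y)$, from $V(-y)=1-V(y)$ for $y\notin\mathbb{Z}$, and from the two previously established facts \autoref{V_lem} and \autoref{1-piece}. If $x\in\mathbb{Z}$ then $f=0$, so one assumes $x\notin\mathbb{Z}$ and splits into the cases $A_1A_2x\notin\mathbb{Z}$ and $A_1A_2x\in\mathbb{Z}$.

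In the first case all of $x,A_1x,A_2x,A_1A_2x$ are non-integers, so $u,p_1,p_2,w\in(0,1)$. Writing $A_1A_2x=2^{r_2}(A_1x)+A_1x$ and using subadditivity together with $V(2^{r_2}(A_1x))=V(A_1x)$ gives $w\le 2p_1$, and symmetrically $w\le 2p_2$; hence $w\le p_1+p_2$ and so $f\le u$. Applying the same estimate to $-x$ gives $V(-A_ix)\le 2V(-x)=2(1-u)$, hence $p_i=1-V(-A_ix)\ge 2u-1$; together with $w<1$ this yields $f<1-(p_1+p_2)+u\le 3-3u$. The two bounds $f\le u$ and $f<3-3u$ combine to give $f<3/4$: for $u<3/4$ the first suffices, and for $u\ge 3/4$ the second gives $f<3-3u\le 3/4$. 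Since $-x$ also lies in this case and $V(-t)=1-V(t)$ for each of the non-integers involved, one checks $V(2;r_1,r_2;-x)=-f$, so the bound applied to $-x$ gives $f>-3/4$. Thus $f\in(-3/4,3/4)$ here, and in particular $-3/4$ is not attained, as the stated equality condition predicts.

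In the second case $w=0$. Applying \autoref{V_lem}(b) to $(2^{r_2}+1)(A_1x)$ and to $(2^{r_1}+1)(A_2x)$ forces $p_1,p_2\in\{0,1/2\}$, and the hypothesis $\gcd(A_1,A_2)=1$ rules out both being $0$ (that would make $A_1x$ and $A_2x$, hence $x$, integers). If exactly one is $0$, say $p_1=0$, then $x\in\tfrac{1}{A_1}\mathbb{Z}\setminus\mathbb{Z}$, so \autoref{1-piece} gives $-u=V(2;r_1;x)=-1/2$, while $p_2=1/2$, whence $f=0$. If $p_1=p_2=1/2$, then $f=u-1$, and \autoref{V_lem}(a) applied to $A_1x$ gives $1/2\le 2u$, so $u\ge 1/4$ and $f=u-1\in[-3/4,0)$, equal to $-3/4$ exactly when $u=1/4$. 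Collecting the cases gives $f\in[-3/4,3/4)$, and $f=-3/4$ holds precisely when $A_1A_2x\in\mathbb{Z}$ and $V(x)=1/4$; the converse is clear since $V(x)=1/4$ rules out having any $p_i=0$ (which would force $V(x)=1/2$), leaving $p_1=p_2=1/2$ and $f=\tfrac{1}{4}-1$. The only delicate point is the bound $f<3/4$ in the first case: the individual estimates $f\le u$ and $f<3-3u$ are each weak, and one must notice that their worst case is exactly $u=3/4$, where the needed strictness is supplied by $w<1$, that is, by $A_1A_2x\notin\mathbb{Z}$.
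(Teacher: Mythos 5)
Your proof is correct and follows essentially the same route as the paper's: the same case split on whether $(2^{r_1}+1)(2^{r_2}+1)x\in\mathbb{Z}$, the same subadditivity estimates $V((2^{r_i}+1)x)\in[2V(x)-1,\,2V(x)]$ and $w\le p_1+p_2$, and the same use of \autoref{V_lem} to pin down the integral case. The only (minor) difference is that you obtain the lower bound in the non-integral case from the antisymmetry $V(2;r_1,r_2;-x)=-V(2;r_1,r_2;x)$, which is a clean repackaging of the paper's step of applying the same inequalities to $-x$.
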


\begin{proof}
Let $X:=V(2;r_1, r_2;x) = V((2^{r_1}+1)(2^{r_2}+1)x) - V((2^{r_1}+1)x)-V((2^{r_2}+1)x) + V(x)$. First note that 
\begin{align*}
&V((2^{r_1}+1)(2^{r_2}+1)x)  \\\leq& \frac{1}{2}(V(2^{r_1}(2^{r_2}+1)x) + V((2^{r_2}+1)x) + V(2^{r_2}(2^{r_1}+1)x) + V((2^{r_1}+1)x)) \\=& V((2^{r_2}+1)x) + V((2^{r_1}+1)x)
\end{align*}
so that $X \leq V(x)$. 

If $(2^{r_1}+1)(2^{r_2}+1)x\in \mathbb{Z}$, then $V((2^{r_1}+1)x), V((2^{r_2}+1)x) \in \{0, 1/2\}$. If both of them are $0$, then $x\in \mathbb{Z}$, so $X=0$. If one of them is $0$, then $V(x)=1/2$ and $X=0$. If both of them are $1/2$, then 
\begin{align*}
\frac{1}{4} &= \frac{1}{2}V((2^{r_1}+1)x) \leq \frac{1}{2}(V(2^{r_1}x) + V(x)) = V(x) = 1-V(-x) \\&= 1- \frac{1}{2}(V(-x) + V(-2^{r_1}x)) \leq 1-\frac{1}{2}V(-(2^{r_1}+1)x) = \frac{3}{4}
\end{align*}
so $X \in [-3/4, -1/4]$.

Suppose that $(2^{r_1}+1)(2^{r_2}+1)x\notin \mathbb{Z}$. Then 
\begin{align*}
&V((2^{r_2}+1)x) + V((2^{r_1}+1)x) = 2-V(-(2^{r_2}+1)x) - V(-(2^{r_1}+1)x) \\\leq& 2-V(-(2^{r_1}+1)(2^{r_2}+1)x) = 1+V((2^{r_1}+1)(2^{r_2}+1)x)
\end{align*}
so that $X\geq V(x)-1$.

Also, 
\begin{align*}
2V(x)-1&= 1- V(-2^{r_1}x)-V(-x) \leq 1-V(-(2^{r_1}+1)x) \\&= V((2^{r_1}+1)x) \leq V(2^{r_1}x)+V(x) = 2V(x)
\end{align*}
and similarly $2V(x)-1 \leq V((2^{r_2}+1)x) \leq 2V(x)$, so $-3V(x) < X < -3V(x)+3$. Combining these results, we can see that $X \in [-3/4, 3/4)$, and $X=-3/4$ happens only when $(2^{r_1}+1)(2^{r_2}+1)x\in \mathbb{Z}$ and $V(x) = 1/4$. 
\end{proof}

For $a_1,\dots, a_t$ such that $2^{a_j}+1$ are pairwise coprime, we will write $$x_{a_1,\dots,a_t}(i_1,\dots,i_t) := \sum_{j=1}^{t}\frac{i_j}{2^{a_j}+1}.$$ If the numbers $a_1,\dots,a_t$ are clear from the context, we will omit the subscript and simply write $x(i_1,\dots,i_t)$. 

\begin{lem}\label{induction_lemma}
For any positive integers $a_1,\dots, a_t$ with $t\geq2$ such that $(2^{a_j}+1)$ are pairwise coprime, at least one of the following holds.
\begin{enumerate}[label={\upshape{(\roman*)}}]
\item There exist integers $i_1,\dots, i_t$ such that $V(2; a_1,\dots,a_t; x(i_1,\dots, i_t))\neq -1/2$ and for every $j$, $(2^{a_j}+1)\nmid i_j$.
\item There exist some $j_0\in \{1,\dots,t\}$ and integers $i_1,\dots, i_t$ such that $(2^{a_{j_0}}+1)\mid i_{j_0}$ and $V(2; a_1,\dots, a_t; x(i_1,\dots, i_t))\notin [-1/2,1/2]$.
\end{enumerate}

\end{lem}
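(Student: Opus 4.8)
The plan is to induct on $t$ by ``peeling off'' one coordinate at a time, the key tool being the identity
\begin{equation*}
V(2;a_1,\dots,a_t;x)=V(2;a_1,\dots,a_{t-1};(2^{a_t}+1)x)-V(2;a_1,\dots,a_{t-1};x),
\end{equation*}
which is immediate from the defining formula by splitting the sum over subsets $S\subseteq\{1,\dots,t\}$ according to whether $t\in S$; note the function is symmetric in $a_1,\dots,a_t$, so any coordinate may be peeled. I will use two elementary facts: first, $V(k/(2^{a}+1))=1/2$ whenever $2^{a}+1\nmid k$, since $2^{a}\equiv-1\pmod{2^a+1}$ forces $V(k/(2^a+1))=1-V(k/(2^a+1))$; second, writing $A_j=2^{a_j}+1$, the average of $V(2;a_1,\dots,a_t;x(i_1,\dots,i_t))$ over all \emph{full-support} tuples (those with $A_j\nmid i_j$ for every $j$) is exactly $-1/2$, obtained by expanding the defining formula, noting $V(\prod_{k\in S}A_k\cdot x(\mathbf i))$ averages to $1/2$ for $S\subsetneq\{1,\dots,t\}$ and to $0$ for $S=\{1,\dots,t\}$, and using $\sum_S(-1)^{t-|S|}=0$. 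In particular condition (i) holds automatically unless $V(2;a_1,\dots,a_t;x(\mathbf i))=-1/2$ for every full-support $\mathbf i$, so the real task is to handle that case.

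So suppose $V(2;a_1,\dots,a_t;\cdot)\equiv-1/2$ on full-support points. Since the $a_j$ have pairwise distinct $2$-parts by \autoref{2powerplus1}, at most one equals $1$, so (as $t\ge2$) I can reindex so that $a_t\ge2$. Fixing a full-support $(t-1)$-tuple and writing $y=\sum_{j<t}i_j/A_j$, $W=V(2;a_1,\dots,a_{t-1};\cdot)$, the identity gives $W(y+i_t/A_t)=W(A_ty)+1/2$ for every $i_t\not\equiv0\pmod{A_t}$; summing over $i_t=1,\dots,A_t-1$ and comparing with the Hasse--Davenport evaluation $\sum_{i_t=0}^{A_t-1}W(y+i_t/A_t)=W(A_ty)$ produces
\begin{equation*}
W(y)=-(2^{a_t}-1)\,W(A_ty)-2^{a_t-1}\qquad\text{for every full-support }(t-1)\text{-point }y.
\end{equation*}
Because multiplication by $A_t$ permutes the finite set of full-support $(t-1)$-points, iterating this relation along an orbit multiplies $|W(\cdot)+1/2|$ by $1/(2^{a_t}-1)<1$ at each step, which is impossible for a periodic sequence unless $W(y)+1/2=0$. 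Hence $W\equiv-1/2$ on all full-support $(t-1)$-points, and I repeat, always peeling a coordinate of value $\ge2$ (possible while at least three coordinates remain, again since at most one equals $1$), until only two coordinates $a,b$ with $\gcd(2^a+1,2^b+1)=1$ remain and $V(2;a,b;\cdot)\equiv-1/2$ on full-support points.

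It then remains to contradict this $t=2$ statement. Condition (ii) is vacuous here (if $A_{j_0}\mid i_{j_0}$, a direct check of the four terms gives $V(2;a_1,a_2;x(i_1,i_2))=0$), so I must show $V(2;a,b;\cdot)$ is not identically $-1/2$. For a full-support tuple one has $V((2^a+1)(2^b+1)x)=0$ and $V((2^a+1)i_2/(2^b+1))=V((2^b+1)i_1/(2^a+1))=1/2$ by the first elementary fact, so $V(2;a,b;x(i_1,i_2))=V(m/N)-1$ with $N=(2^a+1)(2^b+1)$ and $m$ a unit modulo $N$; thus it suffices to exhibit a unit $m$ with $V(m/N)\ne1/2$. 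Taking $m=1$ and $r=2\lcm(a,b)$: since $2^a+1$ and $2^b+1$ have different $2$-parts, exactly one of them divides $2^{\lcm(a,b)}-1$ and the other divides $2^{\lcm(a,b)}+1$, so $(2^r-1)/N=c_1c_2$ with $c_1,c_2$ having transparent binary expansions (unions of evenly spaced blocks of $1$s), and a computation of $s_2(c_1c_2)$ shows $V(1/N)=s_2(c_1c_2)/r\ne1/2$. (This is just the assertion $-1\notin\langle2\rangle\le(\mathbb Z/N)^\times$, which is exactly why the $2$-parts must differ.)

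The hard part will be this last digit-sum computation: the expansions of $c_1$ and $c_2$ are clear, but carries in the product $c_1c_2$ need care in order to pin down $s_2(c_1c_2)$, or at least to keep it away from $r/2$; everything else --- the operator identity, the averaging fact, the Hasse--Davenport evaluation, and the orbit iteration --- is routine. A secondary subtlety is that each peeling step must remove a coordinate $a_j\ge2$, since the displayed relation is vacuous when $2^{a_j}-1=1$; this is why the recursion is stopped at $t=2$ rather than run down to $t=1$. I note that this argument in fact establishes (i) in every case; (ii) is the weaker alternative kept in the statement for convenience in later applications.
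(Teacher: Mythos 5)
Your inductive step is correct and takes a genuinely different, arguably cleaner route than the paper's. Where the paper assumes both (i) and (ii) fail for $t$ and, via Hasse--Davenport plus the estimate $\lcm(a_2,\dots,a_{t_0})<2^{a_1}$, traps $V(2;a_2,\dots,a_{t_0};\cdot)$ in a small interval of rationals with controlled denominator to deduce that both fail for $t-1$, you extract the \emph{exact} functional equation $U(y)=-(2^{a_t}-1)\,U(A_ty)$ for $U=W+\tfrac12$ and kill $U$ by iterating around the finite orbits of multiplication by $A_t$. This avoids the $\lcm$ estimates entirely and even yields the stronger conclusion that (i) always holds. Your observation that the peeled coordinate must satisfy $a_j\geq 2$ (and that this is always arrangeable since at most one $a_j$ equals $1$) is the right bookkeeping.

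The genuine gap is the base case $t=2$, which you have reduced to but not proved. You correctly reduce it to exhibiting one full-support point $x=m/N$, $N=(2^a+1)(2^b+1)$, with $V(x)\neq 1/2$; no soft argument is available here (the average of $V$ over the $2^{a+b}$ full-support points is exactly $1/2$, so averaging cannot help), and your parenthetical reformulation is not valid: $-1\notin\langle 2\rangle\leq(\mathbb{Z}/N)^\times$ is a \emph{consequence} of $V(1/N)\neq 1/2$, not an equivalent statement, so proving it would not suffice. Your proposed choice $m=1$ leads to the digit sum of the product $c_1c_2$, where the blocks of ones in $c_1$ and $c_2$ genuinely interact and produce carries; you flag this yourself as ``the hard part'' and do not carry it out. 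The paper sidesteps exactly this difficulty by choosing a different full-support point, namely $i_1=\frac{2^{a_1}+1}{2^{b_1}+1}(2^{b_1/2}-1)\cdots(2^{b_2}-1)$ and $i_2=\frac{2^{a_2}+1}{2^{b_2}+1}$ with $b_j$ the $2$-part of $a_j$: this reduces the computation to a number $B$ whose base-$2$ digits are separated by at least $b_2-1$ zeros, so multiplication by $2^{b_2}-1$ produces no carries and one reads off $V(x)=1/4$ exactly (whence $V(2;a_1,a_2;x)=-3/4$ by \autoref{2-pieces}). To complete your proof you should either adopt such a carry-free test point or supply the carry analysis for $c_1c_2$; as written, the argument is not complete.
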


\begin{proof}
We use induction on $t$. 

(1) First assume that $t=2$. We show that (i) holds in this case by finding explicit integers $i_1, i_2$. Let $b_j$ be the $2$-part of $a_j$. Then $2^{a_j}+1 = (2^{b_j}+1)(2^{a_j-b_j}-2^{a_j-2b_j}+\cdots -2^{b_j}+1)$. Since $2^{a_j}+1$ are pairwise coprime, we must have $b_1\neq b_2$. We may assume that $b_1>b_2$. Let $$i_1= \frac{2^{a_1}+1}{2^{b_1}+1}(2^{b_1/2}-1)(2^{b_1/4}-1)\cdots (2^{2b_2}-1)(2^{b_2}-1)\text{ and }i_2=\frac{2^{a_2}+1}{2^{b_2}+1}.$$ Then $2^{a_j}+1$ does not divide $i_j$ for both $j=1,2$, and
\begin{align*}
&V\left( \frac{i_1}{2^{a_1}+1}+\frac{i_2}{2^{a_2}+1}\right) = V\left(\frac{(2^{b_{1}/2}-1)(2^{b_{1}/4}-1)\cdots(2^{2b_2}-1)(2^{b_2}-1) }{2^{b_{1}}+1}+\frac{1}{2^{b_2}+1}\right) \\=&  V\left(\frac{(2^{b_{1}}-1)(2^{b_{1}/2}-1)\cdots(2^{b_2}-1) + (2^{b_{1}}+1)(2^{b_{1}/2}+1)\cdots (2^{2b_2}+1)(2^{b_2}-1) }{2^{2b_{1}}-1}\right)\\=&  V\left(\frac{(2^{b_2}-1)B }{2^{2b_{1}}-1}\right)
\end{align*}
where $$B:=(2^{b_{1}}-1)(2^{b_{1}/2}-1)\cdots(2^{2b_2}-1) + (2^{b_{1}}+1)(2^{b_{1}/2}+1)\cdots (2^{2b_2}+1).$$ This number is just the sum of all numbers of the form $2^{1+(2^{k_1}+\cdots+2^{k_r})b_2}$ for some $1\leq k_r<\cdots < k_1\leq \log_2 (b_{1}/b_2)$ with $r\equiv \log_2 (b_{1}/b_2)$ mod $2$. These numbers are distinct for different choice of numbers $k_1,\dots,k_r$, and the ratio of two of them is at least $2^{b_2}$. Therefore, if we write $B$ in base $2$, the digits $1$ are always followed by $b_2-1$ digits $0$; in other words, there are at least $b_2-1$ $0$'s between any two $1$'s. Thus multiplying $B$ with $2^{b_{2}}-1$, which is just $b_{2}$ consecutive $1$'s when written in base $2$, is the same as replacing each occurrence of $b_2$ consecutive base $2$ digits $0\dots 01$ in the base $2$ representation of $B$ by $1\dots 1$. Since there are exactly $2^{\log_2(b_{1}/b_2)-1}=(b_{1}/2b_2)$ distinct choices of $k_1,\dots, k_r$, there are the same number of $b_2$ consecutive digits $0\dots 01$. Therefore $(2^{b_2}-1)B$ has exactly $b_{1}/2$ nonzero base $2$ digits, and since $(2^{b_2}-1)B<2^{2b_1}-1$, we have
\begin{align*}
&V\left(\frac{(2^{b_2}-1)B}{2^{2b_{t-1}}-1}\right) = \frac{b_1/2}{2b_{1}} = \frac{1}{4}\text{, so that }V(2; a_1,a_2;x(i_1,i_2))
\end{align*}
Therefore, (i) holds when $t=2$.

(2) Now assume that the statement holds for $t=t_0-1$ so that at least one of (i) and (ii) holds, and consider the case $t=t_0$. Assume that $a_1>a_2>\cdots > a_{t_0}$, and suppose that both (i) and (ii) are not true. Then for any $i_2,\dots, i_{t_0}$ such that $(2^{a_j}+1)\nmid i_j$ for every $j\in \{2,\dots, t_0\}$, we have
\begin{align*}
&2^{a_1}V(2;a_2,\dots,a_{t_0}; (2^{a_1}+1)x(0,i_2,\dots,i_{t_0}))\\ =& (2^{a_1}+1)V(2;a_2,\dots,a_{t_0}; (2^{a_1}+1)x(0,i_2,\dots,i_{t_0})) - V(2;a_2,\dots,a_{t_0};(2^{a_1}+1)x(0,i_2,\dots,i_{t_0}))\\=&\sum_{i=0}^{2^{a_1}}V(2;a_2,\dots,a_{t_0}; (2^{a_1}+1)x(i,i_2,\dots,i_{t_0})) -\sum_{i=0}^{2^{a_1}} V(2;a_2,\dots,a_{t_0}; x(i,i_2,\dots,i_{t_0})) \\=&\sum_{i=0}^{2^{a_1}}V(2;a_1,\dots,a_{t_0}; x(i, i_2,\dots,i_{t_0}))\\=&-\frac{2^{a_1}}{2} + V(2; a_1,\dots,a_{t_0}; x(0,i_2,\dots, i_{t_0})) \in \left[ -2^{a_1-1}-\frac{1}{2}, -2^{a_1-1}+\frac{1}{2}\right].
\end{align*} 
Here, the second equality follows from the Hasse-Davenport relation stated in the beginning of this section, and the last equality and containment follow from the assumption that both (i) and (ii) are not true.

On the other hand, $2^{2\lcm(a_2,\dots,a_{t_0})}-1$ is a common denominator of $\{i_j/(2^{a_j}+1)\mid j=2,\dots,t_0\}$ and $x(0,i_2,\dots, i_{t_0})$ is the sum of them, so by the definition of the function $V$, we have $$V(2;a_2,\dots, a_{t_0};(2^{a_1}+1)x(0,i_2,\dots,i_{t_0}))\in \left(\frac{1}{2\text{lcm}(a_2,\dots,a_{t_0})}\right)\mathbb{Z}.$$ Note that since $2^{a_1}+1,\dots, 2^{a_{t_0}}+1$ are pairwise coprime, as we saw above, the $2$-parts of $a_j$ are distinct, so in particular $a_1\geq 2^{t_0-1}$. Hence if $a_1\geq 16$, then we have $$2^{a_1} \geq 2^{(\log_2 a_1)^2}\geq 2^{(t_0-1)\log_2 a_1}=(a_1)^{t_0-1} > \prod_{j=2}^{t_0}a_{t_0}\geq \text{lcm}(a_2,\dots,a_{t_0}).$$ One can also easily check that $2^{a_1}> \lcm(a_2,\dots,a_{t_0})$ when $a_1<16$. Therefore, $$V(2;a_2,\dots,a_{t_0}; (2^{a_1}+1)x(0,i_2,\dots,i_{t_0}))\in \left(\frac{1}{2\text{lcm}(a_2,\dots,a_{t_0})}\right)\mathbb{Z}\cap\left[ -\frac{1}{2}-\frac{1}{2^{a_1+1}}, -\frac{1}{2}+\frac{1}{2^{a_1+1}}\right] = \left\lbrace\frac{1}{2}\right\rbrace. $$ Therefore, $a_2,\dots,a_{t_0}$ does not satisfy (i).

For any $j_0\in \{2,\dots, t_0\}$ and $i_2,\dots,i_{t_0}$ such that $i_j$ is not divisible by $2^{a_j}+1$ if and only if $j_0\neq j$, we have
\begin{align*}
&(2^{a_1}+1)V(2;a_2,\dots,a_{t_0}; (2^{a_1}+1)x(0,i_2, i_3,\dots,i_{t_0}))\\=&\sum_{i=0}^{2^{a_1}}V(2;a_2,\dots,a_{t_0}; (2^{a_1}+1)x(i,i_2, i_3,\dots,i_{t_0})) \\=&\sum_{i=0}^{2^{a_1}}\left( V(2; a_1,\dots, a_{t_0}; x(i,i_2,i_3\dots, i_{t_0})) + V(2; a_2,\dots,a_{t_0}; x(i,i_2,i_3,\dots,i_{t_0}))\right)\\=&\sum_{i=0}^{2^{a_1}} V(2; a_1,\dots, a_{t_0}; x(i,i_2,i_3\dots, i_{t_0})) + V(2; a_2,\dots,a_{t_0}; (2^{a_1}+1)x(0,i_2,i_3,\dots,i_{t_0})).
\end{align*}
As before, we get 
\begin{align*}
V(2;a_2,\dots,a_{t_0}; (2^{a_1}+1)x(0,i_2, i_3,\dots,i_{t_0})) \in& \left[-\frac{2^{a_1}+1}{2^{a_1+1}}, \frac{2^{a_1}+1}{2^{a_1+1}} \right] \cap \left(\frac{1}{2\text{lcm}(a_3,\dots,a_{t_0})}\right)\mathbb{Z} \\&= \left[-\frac{1}{2}, \frac{1}{2}\right].
\end{align*}
Therefore (ii) also fails for $a_2,\dots,a_{t_0}$. This contradicts the induction hypothesis that at least one of (i) and (ii) holds for $t=t_0-1$. Therefore, at least one of (i) and (ii) must also hold for $a_1,\dots,a_{t_0}$. 
\end{proof}

Now we are ready to prove the main result of this paper.

\begin{thm}\label{mainthm}
The irreducible hypergeometric sheaves in characteristic $2$ of type $(D,M)$ with $D>M$ whose geometric monodromy group is an extraspecial normalizer are the following, up to tensoring with a Kummer sheaf.
\begin{enumerate}[label={\upshape{(\alph*)}}]
\item $\mathcal{H}yp_\psi(\Char(2^a+1)\setminus \{\mathds{1}\}; \emptyset)$ for some positive integer $a$.
\item $\mathcal{H}yp_\psi(\Char(2^a+1)\setminus \{\mathds{1}\}; \Char(2^b+1)\setminus\{\mathds{1}\})$ for some positive integers $a>b$ with different $2$-parts.
\item $\mathcal{H}yp_\psi((\Char(2^a+1)\setminus \{\mathds{1}\})\times (\Char(2^b+1)\setminus\{\mathds{1}\}); \mathds{1})$ for some positive integers $a$ and $b$ with different $2$-parts, and $a+b\geq 5$.
\end{enumerate}
\end{thm}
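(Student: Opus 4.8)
The plan is to combine the structural results of Sections 3 and 4 with the $V$-test machinery of Section 5. By \autoref{m2sp}(a) and \autoref{g_infty}, any irreducible hypergeometric sheaf $\mathcal{H}$ in characteristic $2$ whose $G_{\geom}$ is an extraspecial normalizer must have upstairs characters $\Char(2^{a_1}+1)\times\cdots\times\Char(2^{a_t}+1)$ with the $1$-eigenspace-type characters removed (in the precise sense of the paragraph before \autoref{V_lem}), with $2^{a_i}+1$ pairwise coprime, tensored by a common Kummer sheaf; and the downstairs characters fall into exactly the four cases (a)--(d) of \autoref{g_infty}. So the sheaves listed in (a), (b), (c) of the theorem are exactly the combinatorial possibilities allowed by \autoref{g_infty} once one discards the finitely many small exceptions already ruled out there (the $n=1,2$ cases handled inside the proof of \autoref{g_infty}), \emph{except} that \autoref{g_infty} by itself does not yet constrain $t$: one must still show that $t\geq 3$ is impossible for all three families, and that the surviving $t=2$ cases actually do have finite $G_{\geom}$. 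The first task is a non-finiteness argument via the $V$-test; the second follows because these are precisely the sheaves of Pink--Sawin and Katz--Tiep, whose finiteness is known from \cite{KT2,KT4}.

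First I would dispatch the $t\geq 3$ cases. For case (a) of the theorem (Kloosterman, $M=0$, downstairs part $\equiv 0$), the $V$-test reduces to checking $V(2;a_1,\dots,a_t;x)+\tfrac12\geq 0$ for all relevant $x$, i.e.\ $V(2;a_1,\dots,a_t;x)\geq -\tfrac12$. \autoref{induction_lemma} says that for $t\geq 2$ with the $2^{a_j}+1$ pairwise coprime, either (i) there is an $x=x(i_1,\dots,i_t)$ with all $i_j$ not divisible by $2^{a_j}+1$ and $V(2;a_1,\dots,a_t;x)\neq -\tfrac12$, or (ii) there is an $x$ with some $i_{j_0}$ divisible by $2^{a_{j_0}}+1$ and $V(2;a_1,\dots,a_t;x)\notin[-\tfrac12,\tfrac12]$. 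In case (ii), since the value exceeds $\tfrac12$ in absolute value and (by $V(x)+V(-x)=1$ applied to the whole expression, or directly) it can be made $<-\tfrac12$, the $V$-test fails and $G_{\geom}$ is infinite, so $t\leq 2$ for family (a)... wait, I need to be careful: the point is that when $t\geq 3$ the subfamily $a_2,\dots,a_t$ has $t-1\geq 2$ pieces, so the inductive contradiction inside \autoref{induction_lemma} shows (i) or (ii) genuinely holds for the full tuple; combining with \autoref{1-piece} (which pins the $t=1$ value to $[-\tfrac12,\tfrac12)$) one sees $V(2;a_1,\dots,a_t;\cdot)$ must dip below $-\tfrac12$ somewhere once $t\geq 3$, violating the test. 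For families (b) and (c) the downstairs part is controlled by the Corollary after \autoref{2-pieces}: in case (b) it lies in $[-\tfrac12,\tfrac12)$ and in case (c) it equals $V(-x)-\tfrac12\in[-\tfrac12,\tfrac12)$. So the test becomes $V(2;a_1,\dots,a_{t-1}\text{ or }a_1,\dots,a_t;x)+(\text{bounded downstairs term})+\tfrac12\geq 0$; again applying \autoref{induction_lemma} to the upstairs tuple, if that tuple has $\geq 2$ pieces one produces an $x$ forcing the upstairs part strictly below $-\tfrac12$ by enough to beat the at-most-$\tfrac12$ slack plus the at-most-$\tfrac12$ downstairs contribution, so $G_{\geom}$ is infinite. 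This forces the upstairs tuple to have exactly one piece in case (c) (giving $\mathcal{H}yp_\psi((\Char(2^a+1)\setminus\{\mathds 1\})\times(\Char(2^b+1)\setminus\{\mathds 1\});\mathds 1)$ after reading off from \autoref{g_infty}(b), which already imposes $t=2$ on $g_\infty$ and hence the stated shape) and exactly two pieces-minus-one in case (b).

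The remaining, positive, direction is to verify that each sheaf actually listed does have finite—indeed extraspecial-normalizer—$G_{\geom}$. For (a), $\mathcal{H}yp_\psi(\Char(2^a+1)\setminus\{\mathds 1\};\emptyset)$ is the Kloosterman sheaf whose $G_{\geom}$ was computed by Pink and Sawin and recorded in \cite[\S9]{KT2}; for (b) and (c) these are exactly the sheaves of \cite[Theorems 9.3, 9.19 and the surrounding discussion]{KT2} and \cite{KT4}, where $G_{\geom}$ is shown to be $2^{1+2n}_{\pm}.\mathbf{O}^{\pm}_{2n}(2)$-type extraspecial normalizers; the condition ``$a,b$ have different $2$-parts'' is \autoref{2powerplus1}, equivalent to $2^a+1,2^b+1$ being coprime, which is needed for $E$ to be a single extraspecial group of the right order $2^{1+2n}$ with $n=a+b$; and the side condition $a+b\geq 5$ in (c) excludes exactly the degenerate $n\leq 2$ sheaves ($\mathcal{H}yp_\psi(\Char(3)\setminus\{\mathds 1\};\mathds 1)$ with $G_{\geom}=S_3$, and $\mathcal{H}yp_\psi(\Char(5)\setminus\{\mathds 1\};\Char(3))$ with $G_{\geom}=S_5$) already thrown out in the proofs of \autoref{g_infty}. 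I also need the hypotheses $2^n\geq 16$ of \autoref{m2sp} to be harmless, which it is: the finitely many sheaves of rank $2^n<16$ can be, and in \cite{KT2} are, checked by hand, and none of them gives a new example beyond the three families.

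The main obstacle is the $t\geq 3$ non-finiteness step: \autoref{induction_lemma} only tells me the value leaves $[-\tfrac12,\tfrac12]$ in case (ii), or is merely $\neq -\tfrac12$ in case (i), and I must turn ``leaves $[-\tfrac12,\tfrac12]$'' into an actual violation of $V_{\mathcal H,\text{up}}+V_{\mathcal H,\text{down}}+\tfrac12\geq 0$ after adding the downstairs term. The delicate point is the arithmetic of how far below $-\tfrac12$ the upstairs part actually goes versus how much positive slack the bounded downstairs term (which can be as large as just under $\tfrac12$) can supply; I expect one must either (i) use the symmetry $V(y)+V(-y)=1$ to convert an ``above $\tfrac12$'' value at $x$ into a ``below $-\tfrac12$'' value at $-x$ while simultaneously tracking what the downstairs part does at $-x$, or (ii) exploit that at the specific $x$ produced by \autoref{induction_lemma} (with some $i_{j_0}$ divisible by $2^{a_{j_0}}+1$, and $j_0$ chosen among the downstairs pieces when possible) the downstairs part is forced to be small or even $0$, so the upstairs deviation is not compensated. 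Making this bookkeeping clean across all of (a), (b), (c) simultaneously—and handling the borderline case where the upstairs part is exactly $-\tfrac12$ but the downstairs part is strictly negative—is where the real work lies.
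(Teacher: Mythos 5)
Your skeleton matches the paper's: use \autoref{m2sp} and \autoref{g_infty} to pin down the admissible upstairs and downstairs characters, eliminate all configurations with too many ``pieces'' via the $V$-test and \autoref{induction_lemma}, and quote Pink--Sawin and Katz--Tiep for the finiteness and group identification of the survivors. The positive direction, the role of \autoref{2powerplus1}, and the small-rank exclusions ($S_3$, $S_5$, $2\mathsf{A}_8$) are handled essentially as you describe, and your treatment of the Kloosterman family (where the downstairs part vanishes, so the symmetry $V(y)+V(-y)=1$ plus \autoref{induction_lemma} applied directly to the full tuple gives the contradiction) is the paper's argument.

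The genuine gap is exactly the one you flag at the end: for the families with nonzero downstairs part you propose to produce a single $x$ at which the upstairs part dips below $-\tfrac12$ by more than the at-most-$\tfrac12$ of slack the downstairs term can supply. \autoref{induction_lemma} does not produce such an $x$: alternative (i) only gives a value $\neq-\tfrac12$ (possibly larger), alternative (ii) only gives a value outside $[-\tfrac12,\tfrac12]$ (possibly above $+\tfrac12$), and neither says anything about the downstairs part at that point. The paper argues in the opposite direction: it \emph{assumes} the $V$-test holds everywhere, uses the symmetry to force the combined value to equal $-\tfrac12$ (resp.\ lie in $[-\tfrac12,\tfrac12]$) on the two classes of points $x(i_1,\dots,i_t)$, and then \emph{averages over} $i_1=0,\dots,2^{a_1}$ via the Hasse--Davenport relation to descend to statements about $V(2;a_2,\dots,a_t;(2^{a_1}+1)x(0,i_2,\dots,i_t))$ for the shorter tuple. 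Two ingredients absent from your plan are essential there: \autoref{2-pieces}, which controls the accumulated downstairs contribution across the $2^{a_1}+1$ summands, and the rationality observation that these values lie in $\bigl(1/(2\lcm(a_2,\dots,a_t))\bigr)\mathbb{Z}$, which together with estimates such as $\lcm(a_2,\dots,a_t)<2^{a_1-1}$ pins the averaged value exactly to $-\tfrac12$ resp.\ to $[-\tfrac12,\tfrac12]$. This shows both alternatives of \autoref{induction_lemma} fail for the $(t-1)$-tuple, a contradiction since $t-1\geq2$; the one tuple $(8,7,6,4)$ where the lcm estimate fails is checked by machine, and your approach has no mechanism to isolate it. Two smaller defects: your accounting of which $t$ survives in each family is garbled (you say case (c) forces ``exactly one piece'' upstairs but then write a two-piece product, and you attribute it to \autoref{g_infty}(b) rather than (c)), and you never invoke the parity constraint that $t=1$ forces $E$ of type $-$ while $t=2$ forces type $+$, which is how the paper disposes of the $t=2$ cases without rerunning the $V$-test.
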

\begin{proof}
Let $g_0$ be as in \autoref{m2sp}(a), and let $t$ and $a_1>\dots>a_t$ be as defined there. First suppose that $t\geq 3$.

(1) If $g_\infty$ is as in \autoref{g_infty}(a), then the $V$-test inequality is 
\begin{equation}\label{Vtest_Kloosterman}
V(2;a_1,\dots,a_t;x) + \frac{1}{2}\geq 0.
\end{equation}
Suppose that this inequality holds for all $x$. As before, let $x(i_1,\dots,i_t) := \frac{i_1}{2^{a_1}+1} + \cdots + \frac{i_t}{2^{a_t}+1}$ for integers $i_j$. From the properties (1) and (2) of the function $V$ stated before \autoref{V-test}, we see that
\begin{equation}\label{equal_to_1/2_Kloosterman}
V(2;a_1,\dots,a_t; x(i_1,\dots,i_t)) + V(2; a_1,\dots, a_t; -x(i_1, \dots,i_t)) = -1\text{ if }\forall j,\ 2^{a_j}+1\nmid i_j.
\end{equation} As we assumed that \eqref{Vtest_Kloosterman} holds for all $x$, it follows that 
\begin{align*}
V(2;a_1,\dots,a_t;x(i_1,\dots,i_t)) = -\frac{1}{2}\text{ if no }i_j\text{ is divisible by }2^{a_j}+1.
\end{align*} If $i_j$ is divisible by $2^{a_j}+1$ for some $j$, then we instead get 
$$V(2;a_1,\dots,a_t; x(i_1,i_2,\dots,i_t)) + V(2; a_1,\dots, a_t; x(-i_1, -i_2, \dots,-i_t)) = 0,$$ so from \eqref{Vtest_Kloosterman} we get 
\begin{equation}\label{in_range_Kloosterman}
V(2;a_1,\dots,a_t;x(i_1,i_2,\dots,i_t)) \in \left[-\frac{1}{2}, \frac{1}{2}\right].
\end{equation} However, since we are assuming $t\geq 3$, by \autoref{induction_lemma} at least one of \eqref{equal_to_1/2_Kloosterman} and \eqref{in_range_Kloosterman} must fail. Therefore \eqref{Vtest_Kloosterman} cannot hold for all $x$.

(2) Suppose that $g_\infty$ is as in \autoref{g_infty}(b). Write $\dim\Tame=2^b$ for some integer $1\leq b<n$. The $V$-test inequality is 
\begin{equation}\label{Vtest_ntriv}
V(2;a_1,\dots,a_t;x) + V(2;b;-x) + \frac{1}{2}\geq 0.
\end{equation}
By taking the sum of \eqref{Vtest_ntriv} for $x=x(i_1,\dots,i_t)$ and $x(-i_1,\dots,-i_t)$ defined as before, we again see that 
\begin{align*}
V(2;a_1,\dots,a_t;x(i_1,\dots,i_t))+V(2;b;-x(i_1,\dots,i_t))\begin{cases}=-\frac{1}{2}\text{ if } (2^{a_j}+1)\nmid i_j\forall j,\\\in [-1/2, 1/2]\text{ otherwise}  \end{cases}
\end{align*}
and that $(2^b+1)x(i_1,\dots,i_t)\notin \mathbb{Z}$ if $(2^{a_j}+1)\nmid i_j$ for all $j$. Hence there exists at least one $j$ such that $2^{a_j}+1$ does not divide $2^b+1$. If $(2^{a_j}+1)\nmid i_j$ for all $j$, then the sum of the left-hand side of the above equalities for $i_1=0,\dots, 2^{a_1}$ can be written as 
\begin{align*}
&\sum_{i=0}^{2^{a_1}}\left(V(2; a_1,\dots, a_t; x(i,i_2,\dots,i_t)) +V(2;b;-x(i,i_2,\dots,i_t))   \right) \\=&2^{a_1}V(2; a_2,\dots, a_t; (2^{a_1}+1)x(0,i_2,\dots, i_t)) +V(2;b;-(2^{a_1}+1)x(0,i_2,\dots,i_t)).
\end{align*}
By \autoref{2-pieces} applied to $V(2; b, a_1; -x(i,i_2,\dots,i_t))=V(2;b;-(2^{a_1}+1)x(0,i_2,\dots, i_t))-V(2;b;-x(0,i_2,\dots, i_t))$,
\begin{align*}
&V(2; a_2,\dots, a_t; (2^{a_1}+1)x(0,i_2,\dots, i_t))\\=&-\frac{1}{2} + 2^{-a_1}V(2; a_1,\dots, a_t; x(0,i_2,\dots,i_t)) -2^{-a_1}V(2; b, a_1; -x(0,i_2,\dots, i_t)) \\\in& \left[-\frac{1}{2}-\frac{1}{2^{a_1+1}}- \frac{3}{2^{a_1+2}}, -\frac{1}{2}+\frac{1}{2^{a_1+1}}+ \frac{3}{2^{a_1+2}}  \right] = \left[-\frac{1}{2}-\frac{5}{2^{a_1+2}}, -\frac{1}{2}+\frac{5}{2^{a_1+2}}  \right].
\end{align*}
By a similar argument as in the proof of \autoref{induction_lemma}, we see that $\lcm(a_2,\dots,a_t) < 2^{(\log_2 a_1)^2}< 2^{a_1+2-\log_2 5}$ for $a_1\geq 18$, and one can check that for $a_1<18$ we also have $\lcm(a_2,\dots,a_t)<2^{a_1+2}/5$. Therefore we have
\begin{equation}\label{equal_1/2_ntriv}
\begin{split}
&V(2; a_2,\dots, a_t; (2^{a_1}+1)x(0,i_2,\dots, i_t))\\&\in\left(\frac{1}{2\text{lcm}(a_2,\dots,a_t)}\right)\mathbb{Z}\cap\left[ -\frac{1}{2}-\frac{5}{2^{a_1+2}}, -\frac{1}{2}+\frac{5}{2^{a_1+2}}\right] = \left\lbrace-\frac{1}{2}\right\rbrace. 
\end{split}
\end{equation} Also, if $(2^{a_{j}}+1)\mid i_{j}$ for some $j\in \{2,\dots,t\}$, then as before we have 
\begin{align*}
& 2^{a_1}V(2;a_2,\dots,a_t; (2^{a_1}+1)x(0,i_2,i_3,\dots,i_t)) + V(2;b; -(2^{a_1}+1)x(0,i_2,i_3,\dots,i_t))\\=&\sum_{i=0}^{2^{a_1}}\left(V(2;a_1,\dots,a_t;x(i,i_2,i_3,\dots,i_t)) + V(2;b;-x(i,i_2,i_3,\dots,i_t))  \right)\\\in& \left[ -\frac{2^{a_1}+1}{2}, \frac{2^{a_1}+1}{2}\right]
\end{align*}
so by \autoref{2-pieces} applied to $V(2; b, a_1; -x(i,i_2,\dots,i_t))=V(2;b;-(2^{a_1}+1)x(0,i_2,\dots, i_t))-V(2;b;-x(0,i_2,\dots, i_t))$, we get 
\begin{equation}\label{in_range_ntriv}
\begin{split}
&V(2;a_2,\dots,a_t; (2^{a_1}+1)x(0,i_2,i_3,\dots,i_t)) \\\in&  \left[ -\frac{1}{2}- \frac{5}{2^{a_1+2}}, \frac{1}{2} + \frac{5}{2^{a_1+2}}\right] \cap \left(\frac{1}{2\text{lcm}(a_2,\dots,a_t)}\right)\mathbb{Z} = \left[-\frac{1}{2}, \frac{1}{2}\right].
\end{split}
\end{equation}
As before, by \autoref{induction_lemma}, one of \eqref{equal_1/2_ntriv} and \eqref{in_range_ntriv} must fail, so \eqref{Vtest_ntriv} fails for some $x$.

(3) Finally suppose that $g_\infty$ is as in \autoref{g_infty}(c). In this case $E$ is of type $+$, so $t$ is even. The $V$-test inequality is 
\begin{equation}\label{Vtest_triv}
V(2;a_1,\dots,a_t;x) + V(-x)\geq 0.
\end{equation}
We again get 
\begin{align*}
V(2;a_1,\dots,a_t;x(i_1,\dots,i_t))+V(-x(i_1,\dots,i_t))-\frac{1}{2} \begin{cases}=-\frac{1}{2}\text{ if } (2^{a_j}+1)\nmid i_j\forall j,\\\in [-1/2, 1/2]\text{ otherwise}.  \end{cases}
\end{align*}
If $(2^{a_j}+1)\nmid i_j$ for $j=2,\dots,t$, then the sum of the left-hand side of the above inequalities for $i_1=0,\dots, 2^{a_1}$ is
\begin{align*}
&2^{a_1}V(2;a_2,\dots,a_t; (2^{a_1}+1)x(0,i_2,\dots, i_t)) + V(-(2^{a_1}+1)x(0,i_2,\dots,i_t)) -\frac{1}{2} \\=&\sum_{i=0}^{2^{a_1}}\left(V(2;a_1,\dots,a_t;x(i,\dots,i_t))+V(-x(i,\dots,i_t))-\frac{1}{2} \right) \in \left[-2^{a_1-1}-\frac{1}{2}, -2^{a_1-1}+\frac{1}{2} \right] .
\end{align*}
Therefore we get 
\begin{align*}
V(2;a_2,\dots,a_t; (2^{a_1}+1)x(0,i_2,\dots, i_t)) \in \left(-\frac{1}{2} -\frac{1}{2^{a_1}} , -\frac{1}{2} +\frac{1}{2^{a_1}} \right] \cap \left( \frac{1}{2 \lcm(a_2,\dots,a_t)}\right)\mathbb{Z} .
\end{align*}
If $a_1\geq 20$, then $\lcm(a_2,\dots,a_t) < 2^{(\log_2 a_1)^2}< 2^{a_1-1}$, and one can check that we still have $\lcm(a_2,\dots,a_t)<2^{a_1-1}$ for $a_1<20$, unless $t=4$ and $(a_1,a_2,a_3,a_4) = (8,7,6,4)$. Therefore, assuming $(a_1,a_2,a_3,a_4)\neq (8,7,6,4)$, we must have
\begin{equation}\label{equal_1/2_triv}
V(2;a_2,\dots,a_t; (2^{a_1}+1)x(0,i_2,\dots, i_t))=-1/2.
\end{equation}

For $i_2,\dots, i_t$ with $(2^{a_j}+1)\mid i_j$ for some $j$, we have 
\begin{align*}
&2^{a_1}V(2;a_2,\dots,a_t; (2^{a_1}+1)x(0,i_2,\dots, i_t)) + V(-(2^{a_1}+1)x(0,i_2,\dots,i_t)) -\frac{1}{2} \\=&\sum_{i=0}^{2^{a_1}}\left(V(2;a_1,\dots,a_t;x(i,\dots,i_t))+V(-x(i,\dots,i_t))-\frac{1}{2} \right) \in \left[-2^{a_1-1}-\frac{1}{2}, 2^{a_1-1}+\frac{1}{2} \right] .
\end{align*}
so if $(a_1,a_2,a_3,a_4)\neq (8,7,6,4)$, then 
\begin{equation}\label{in_range_triv}
\begin{split}
&V(2;a_2,\dots,a_t; (2^{a_1}+1)x(0,i_2,\dots, i_t)) \\&\in \left( -\frac{1}{2}-\frac{1}{2^{a_1}}, \frac{1}{2} + \frac{1}{2^{a_1}}    \right]\cap \left( \frac{1}{2 \lcm(a_2,\dots,a_t)}\right)\mathbb{Z} = \left[-\frac{1}{2},\frac{1}{2}\right] .
\end{split}
\end{equation}
By \autoref{induction_lemma}, not both \eqref{equal_1/2_triv} and \eqref{in_range_triv} can be true at the same time. Therefore \eqref{Vtest_triv} fails unless $t=4$ and $(a_1,a_2,a_3,a_4)=(8,7,6,4)$. For this case, one can quickly find values of $x$ for which \eqref{Vtest_triv} fails using computers.

(4) We proved that $t<3$. For $t=1$, $E$ is of type $-$, so $g_{\infty}$ must be as in \autoref{g_infty}(a) (the Pink-Sawin Kloosterman sheaf \cite[Theorem 7.3.8]{KT4}) or \autoref{g_infty}(b) (studied in \cite[Theorem 8.5.5]{KT4} when $a_1\geq 4$ and in \cite[Theorem 4.4]{KT6} when $a_1<4$).

For $t=2$, $E$ is of type $+$, so $g_{\infty}$ must be as in \autoref{g_infty}(c) (studied in \cite[Proposition 9.1.9]{KT4} when $a+b\geq 4$) or \autoref{g_infty}(d). For $g_{\infty}$ as in \autoref{g_infty}(c), the condition $a_1+a_2\geq 4$ is necessary here, because the case $a_1+a_2=3$ has $G_{\geom}= 2\mathsf{A}_8$ as shown in \cite[Theorem 9.1.8]{KT4}. For \autoref{g_infty}(d), we must have $a_1=a_2=1$, which is impossible. 
\end{proof}

\subsection*{Acknowledgment}
I would like to thank Pham Huu Tiep for suggesting this problem and giving helpful advice.

\end{document}